\newtheorem{thm}{Theorem}[section]
\newtheorem{cor}[thm]{Corollary}
\newtheorem{lem}[thm]{Lemma}
\newtheorem{prop}[thm]{Proposition}
\numberwithin{equation}{section}
\newcommand{\mathr}{\mathbb{R}}
\newcommand{\ubox}{\overline{\text{dim}}_\text{B}}
\newcommand{\lbox}{\underline{\text{dim}}_\text{B}}
\newcommand{\boxd}{\text{dim}_\text{B}}
\newcommand{\aso}{{\text{dim}}_\text{A}}
\newcommand{\low}{{\text{dim}}_\text{L}}
\newcommand{\haus}{{\text{dim}}_\text{H}}
\newcommand{\asospec}{{\text{dim}}^{\theta}_\text{A}}
\newcommand{\lowspec}{{\text{dim}}^{\theta}_\text{L}}
\newcommand{\pmax}{p_{\max}}
\newcommand{\rj}{r_{j}(\xi)}
\newcommand{\rjj}{r_{j+1}(\xi)}
\newcommand{\rl}{r_{l}(\xi)}
\newcommand{\rll}{r_{l+1}(\xi)}
\newcommand{\nj}{n_{j}(\xi)}
\newcommand{\petal}{p(\omega)}
\newcommand{\size}[1]{\vert #1 \vert}
\renewcommand{\epsilon}{\varepsilon}
\renewcommand{\geq}{\geqslant}
\renewcommand{\leq}{\leqslant}
\definecolor{lightgray}{rgb}{0.83, 0.83, 0.83}
\title{Assouad type dimensions of parabolic   Julia sets}
\author{Jonathan M. Fraser and Liam Stuart \\
The University of St Andrews, Scotland}
\newlength{\bibitemsep}\setlength{\bibitemsep}{.55\baselineskip plus .05\baselineskip minus .05\baselineskip}
\newlength{\bibparskip}\setlength{\bibparskip}{0pt}
\let\oldthebibliography\thebibliography
\renewcommand\thebibliography[1]{%
  \oldthebibliography{#1}%
  \setlength{\parskip}{\bibitemsep}%
  \setlength{\itemsep}{\bibparskip}%
}
\date{}
\begin{document}
\pagenumbering{arabic}
\maketitle
\begin{abstract}
We prove that the Assouad dimension of a parabolic Julia set is $\max\{1,h\}$ where $h$ is the Hausdorff dimension of the Julia set.  Since $h$ may be strictly less than 1, this provides examples where the Assouad and Hausdorff dimensions are distinct.  The box and packing dimensions of the Julia set are also known to coincide with $h$ and, moreover, $h$ can be characterised by a topological pressure function.  The distinctive behaviour of the Assouad dimension invites further analysis of the `Assouad type dimensions', including the lower dimension and the Assouad and lower spectra.  We derive formulae for all of the Assouad type dimensions for parabolic Julia sets and the associated $h$-conformal measure. Further, we show that if a Julia set has a Cremer point, then the Assouad dimension is 2.\\
 
\textit{Mathematics Subject Classification} 2020:   \quad  28A80, \quad   37F10.

\textit{Key words and phrases}:  rational map,  Julia set,  conformal measure,   parabolicity,   Assouad dimension,  Assouad spectrum, lower dimension, lower spectrum, Sullivan dictionary.
 \end{abstract}

\tableofcontents

\section{Introduction}\label{Introduction}

The Julia set of a rational map of the extended complex plane is typically a beautiful fractal with intricate geometric properties.   Our main case of interest is when the rational map has a rationally indifferent periodic  point (a parabolic point) but the Julia set contains no critical points (that is, we consider \emph{parabolic Julia sets}).  That said, we also consider Cremer Julia sets which do not fall into this class.  The Hausdorff, box and packing dimensions of a parabolic  Julia set are known to coincide and are given by the  smallest  zero   of the topological pressure, which we denote by $h$, see \cite{DU2}.  The Assouad dimension is a notion of growing relevance in fractal geometry and dimension theory of dynamical systems and up until now the Assouad dimension of Julia sets has not been considered.  Our main result is that, unlike the dimensions discussed above,  the Assouad dimension of a parabolic Julia set  is \emph{not} necessarily equal to $h$. It is instead given by $\max\{1,h\}$ and, since $h$ may be strictly less than 1, this provides examples where the Assouad and Hausdorff dimensions are distinct.  Moreover, it is known that $h<2$ and so the Assouad dimension is also strictly less than 2.  Therefore, our result can be viewed as a refinement of the (known) result that parabolic Julia sets are porous, see \cite[Theorem 1.4]{LG}.   On the other hand, if the Julia set contains a non-linearisable \emph{irrationally} indifferent fixed point (a Cremer point), then we show that its Assouad dimension is 2.

The distinctive behaviour of the Assouad dimension of parabolic Julia sets invites further analysis of the Assouad type dimensions.  We derive formulae for the Assouad and  lower dimensions as well as  the Assouad and lower spectra, both of the Julia set itself and the associated $h$-conformal measure.   These results shed some new light on the `Sullivan dictionary' in the context of dimension theory, see \cite{geomded, stuartsurvey}.  The Assouad and lower spectra were introduced    in \cite{FYu} and provide an `interpolation' between the box dimension and the Assouad and lower dimensions, respectively.    The motivation for the introduction of these `dimension spectra' was to gain a more nuanced understanding of fractal sets than that provided by the dimensions considered in isolation.  This is already proving a fruitful programme with applications emerging in a variety of  settings including problems in  harmonic analysis, see work of Anderson, Hughes, Roos, and Seeger \cite{AHRS, RS}, which uses the Assouad spectrum to study spherical maximal functions.

Our proofs use a variety of techniques.    We take some inspiration from the paper \cite{Fr1}, which gave the Assouad dimension of Kleinian limit sets with parabolic points.  That said, parallels in the strategy only go so far, partly due to the lack of   understanding  of the `hidden 3-dimensional geometry' of Julia sets, see \cite{MC1, MC3}.  We also take  inspiration from the papers \cite{DU4,  SU1, SU2}, where ideas from Diophantine approximation are applied in the context of conformal dynamics. Our key technical tool is the global measure formula from \cite{SU1} which describes the conformal measure of an arbitrary ball in terms of parabolic fluctuations. We also require a quantitative version of the Leau--Fatou flower theorem (see Lemma \ref{qflower}), which describes the geometry of the Julia set near parabolic points. Interestingly, we only require the  flower theorem to study the \emph{lower} dimension and spectrum, emphasising that the lower dimension, although the natural dual to the Assouad dimension,  does not always yield to dual arguments.

For notational convenience we write $A \lesssim B$ if there exists a uniform constant $C \geq 1$ such that $A \leq CB$, and $A \gtrsim B$ if $B \lesssim A$. We write $A \approx B$ if $A \lesssim B$ and $B \lesssim A$.  The important thing here is the uniform nature of the constant $C$, which is only  allowed to depend on parameters which   may be considered fixed throughout the paper, such as the rational map $T$. They also may depend on the interpolation parameter $\theta$, since we may fix it before proving a particular result. But the implicit constants will never depend on variables, such as scales $r>0$ or points $x \in \mathbb{C}$.  Therefore, if $A,B>0$ are themselves constant, it trivially holds that $A \approx B$, but the true value of the notation is when $A$ and $B$ vary with respect to a parameter which the constant $C$ may not depend on.  For example, the number of primes less than $N$ is $\lesssim \frac{N}{\log N}$ by the prime number theorem (with an implicit constant independent of the variable $N$).

\section{Definitions and background}\label{Prelims}
\subsection{Dimensions of sets and measures}
\label{DimPrelims}
We recall the key notions from fractal geometry and dimension theory which we will use throughout the paper.  For a more in-depth treatment see the books \cite{BP, FK} for background on Hausdorff and box dimensions, and \cite{Fr2} for Assouad type dimensions. Julia sets will be subsets of the Riemann sphere  $\hat{\mathbb{C}} = \mathbb{C} \cup \{\infty\}$.  However, by a standard reduction we will assume that the Julia sets are bounded subsets of the complex plane $\mathbb{C}$, which we  identify with $\mathbb{R}^2$.  Therefore, it is convenient to recall dimension theory in Euclidean space only. 

Let $F \subseteq \mathbb{R}^d$.   We write $\haus F$, $\ubox F$ and $\lbox F$  for the \emph{Hausdorff, upper} and \emph{lower box dimensions} of $F$, respectively,  but refer the reader to \cite{BP, FK} for the precise definitions since we do not use them directly.  When the upper and lower box dimensions coincide we write $\boxd F$ for the common value, simply referring to the \emph{box dimension}.  We write 
\[
\size{F} = \sup_{x,y \in F} |x-y| \in [0,\infty]
\]
 to denote the diameter of  $F$.  Given $r>0$, we write $N_r(F)$ for the smallest number of open balls of radius $r$ required to cover $F$. We write $M_r(F)$ to denote the largest cardinality of a packing of $F$ by closed balls of radius $r$ centred in $F$.  In what follows, it is easy to see that  replacing  $N_r(F)$   by $M_r(F)$  yields an equivalent definition and so we sometimes switch between minimal coverings and maximal packings in our arguments. This is standard in dimension theory.

The \textit{Assouad dimension} of $F \subseteq \mathbb{R}^d$ is defined by
\begin{align*}
\aso F = \inf \Bigg\{ s \geq 0 \mid  \exists C>0 \ : \ \forall \ 0<r<R<|F| \  :  \ \forall x \in F \  :  \ 
N_r(B(x,R) \cap F) \leq C \left(\frac{R}{r} \right)^{s} \Bigg\}. 
\end{align*}
Similarly, the \textit{lower dimension} of $F$ is defined by
\begin{align*}
\low F = \sup \Bigg\{ s \geq 0 \mid  \exists C>0 \ : \ \forall \ 0<r<R<|F|  \  :  \ \forall x \in F \  :  \   
N_r(B(x,R) \cap F) \geq C \left(\frac{R}{r} \right)^{s} \Bigg\}
\end{align*}
provided $|F| >0$ and otherwise it is 0. Importantly, for compact $F$,
\[
\low F \leq \haus F \leq \lbox F \leq \ubox F \leq \aso F.
\]
The Assouad and lower spectrum, introduced in \cite{FYu}, interpolate between the box dimensions and the Assouad and lower dimensions in a meaningful way.  They provide a parametrised family of dimensions by fixing the relationship between the two scales $r<R$ used to define Assouad and lower dimension.   Studying the dependence on the parameter within this family  thus yields finer and more nuanced information about the local structure of the set.  For example, one may understand which scales `witness' the behaviour described by the Assouad and lower dimensions.  For $\theta \in (0,1)$, the \textit{Assouad spectrum} of $F$ is given by
\begin{align*}
\asospec F = \inf \Bigg\{ s \geq 0 \mid  \exists C>0 \ : \ \forall \ 0<r<1 \  :  \ \forall x \in F \  :  \ 
N_r(B(x,r^{\theta}) \cap F) \leq C \left(\frac{r^{\theta}}{r} \right)^{s} \Bigg\} 
\end{align*}
and the \textit{lower spectrum} of $F$ by
\begin{align*}
\lowspec F = \sup \Bigg\{ s \geq 0 \mid  \exists C>0 \ : \ \forall \ 0<r<1 \  :  \ \forall x \in F \  :  \  
N_r(B(x,r^{\theta}) \cap F) \geq C \left(\frac{r^{\theta}}{r} \right)^{s} \Bigg\}. 
\end{align*}
See \cite{Fr2} for more background and basic properties of the Assouad and lower spectra.  It was shown in \cite{FYu} that, for a bounded set $F \subseteq \mathbb{R}^d$, 
\begin{align}
\ubox F &\leq \asospec F \leq \min\left\{\aso F, \ \frac{\ubox F}{1-\theta}\right\} \label{basicbound}\\
\low F &\leq \lowspec F \leq \lbox F. \nonumber
\end{align}
In particular, $\asospec F \to \ubox F$ as $\theta \to 0$.  Whilst the analogous statement does not hold for the lower spectrum in general, it was proved in \cite[Theorem 6.3.1]{Fr2} that $\lowspec F \to \lbox F$ as $\theta \to 0$ provided $F$ satisfies a strong form of  dynamical invariance.   Whilst the fractals we study are not quite covered by this result, we shall see that this interpolation holds nevertheless. The limits $\lim_{\theta \to 1} \asospec F $ and $\lim_{\theta \to 1} \lowspec F$ are known to exist in general but may not be the Assouad and lower dimensions, respectively.

There is an analogous dimension theory of measures, and the interplay between the dimension theory of fractals and the measures they support is fundamental to fractal geometry, especially in the dimension theory of dynamical systems. Let $\mu$ be a locally finite Borel measure on $\mathbb{R}^d$. The \textit{Assouad dimension} of $\mu$ with support given by $F$ is defined by
\begin{align*}
\aso \mu = \inf \Bigg\{ s \geq 0 \mid  \exists C>0 \ : \ \forall \ 0<r<R< \vert F \vert \  :  \ \forall x \in F \  :  \  \frac{\mu(B(x,R))}{\mu(B(x,r))} \leq C \left(\frac{R}{r} \right)^{s} \Bigg\} 
\end{align*}
and, provided $ |F|> 0$, the \textit{lower dimension} of $\mu$ is given by
\begin{align*}
\low \mu = \sup \Bigg\{ s \geq 0 \mid  \exists C>0 \ : \ \forall \ 0<r<R< \vert F \vert \  :  \ \forall x \in F \  :  \  \frac{\mu(B(x,R))}{\mu(B(x,r))} \geq C \left(\frac{R}{r} \right)^{s} \Bigg\}
\end{align*}
and otherwise it is 0. By convention we assume that $\inf \emptyset = \infty$. The Assouad and lower dimensions of measures  were introduced in \cite{Ka2}, where they were referred to as the upper and lower regularity dimensions, respectively. It is well known (see \cite[Lemma 4.1.2]{Fr2}) that, for a doubling Borel probability measure $\mu$ with support $F \subseteq \mathbb{R}^{d}$, 
\[
\low \mu \leq \low F \leq \aso F \leq \aso \mu.
\]
In our applications, all measures will be doubling measures. For $\theta \in (0,1)$, the \textit{Assouad spectrum} of $\mu$ with support given by $F$ is defined by
\begin{align*}
\asospec \mu = \inf \Bigg\{ s \geq 0 \mid  \exists C>0 \ : \ \forall \ 0<r< \vert F \vert \  :  \ \forall x \in F \  :  \  \frac{\mu(B(x,r^\theta))}{\mu(B(x,r))} \leq C \left(\frac{r^\theta}{r} \right)^{s} \Bigg\} 
\end{align*}
and, provided $|F| > 0$, the \textit{lower spectrum} of $\mu$ is given by
\begin{align*}
\lowspec \mu = \sup \Bigg\{ s \geq 0 \mid  \exists C>0 \ : \ \forall \ 0<r< \vert F \vert \  :  \ \forall x \in F \  :  \  \frac{\mu(B(x,r^\theta))}{\mu(B(x,r))} \geq C \left(\frac{r^\theta}{r} \right)^{s} \Bigg\} 
\end{align*}
and otherwise it is 0.  It is known (see \cite{FFK} for example) that
\[
\low \mu \leq \lowspec \mu \leq \asospec \mu \leq \aso \mu
\]
 and, if $\mu$ is fully supported on a closed set $F$, then 
\[
\lowspec \mu \leq \lowspec F \leq \asospec F \leq \asospec \mu.
\]
 The \textit{upper box dimension} of $\mu$  with support given by $F$ is defined by
\begin{align*}
\ubox \mu = \inf \Big\{ s \geq 0 \mid  \exists C>0 \ : \ \forall \ 0<r< \vert F \vert \  :  \ \forall x \in F \  :  \  \mu(B(x,r)) \geq Cr^{s} \Big\} 
\end{align*}
and the \textit{lower box dimension} of $\mu$ is given by
\begin{align*}
\lbox \mu = \inf \Big\{ s \geq 0 \mid \exists  C>0 \  : \ \forall  r_0>0 \ : \  \exists \ 0<r<r_0 \ : \ \forall \ x\in F \  :  \  \mu(B(x,r)) \geq Cr^{s} \Big\}.  
\end{align*}
If $\ubox \mu = \lbox \mu$, then we refer to the common value as the \textit{box dimension} of $\mu$, denoted by $\boxd \mu$. These definitions of the box dimension of a measure were introduced only recently  in \cite{FFK}.  There it was also shown that, if the support of $\mu$ is a compact set $F$,  then for $\theta \in (0,1)$,
\[\ubox \mu \leq \asospec \mu \leq \min\left\{\aso \mu, \frac{\ubox \mu}{1-\theta}\right\},\]
\[
\ubox F \leq \ubox \mu
\]
and
\[
\lbox F \leq \lbox \mu.
\]

\subsection{Rational maps and Julia sets}
\label{JuliaPrelims}
 Let $T \colon \hat{\mathbb{C}} \rightarrow \hat{\mathbb{C}}$ be a rational map with $\text{deg}(T) \geq 2$, and write  $J(T)$ to denote  the \textit{Julia set} of $T$, which is equal to the closure of the repelling periodic points of $T$.  The Julia set is closed and $T$-invariant and we assume throughout that it is not the whole sphere.  We may then assume that $J(T)$ is a compact subset of $\mathbb{C}$ by a standard reduction.  If this is not the case, then simply conjugate a point $z \notin J(T)$ to $\infty$ via a M\"obius inversion and then the closedness of the resulting Julia set ensures it lies in a bounded region of $\mathbb{C}$.   This is essentially just choosing a different point on the Riemann sphere to represent the point at infinity.  For a more detailed discussion on the dynamics of rational maps, see \cite{B2,  milnor}. Here and throughout balls are open unless otherwise stated.

A periodic point $\xi \in \hat{\mathbb{C}}$ with period $p$ is said to be \textit{rationally indifferent} or \emph{parabolic} if
$(T^p)^{'}(\xi) = e^{2 \pi i q}$ for some $q \in \mathbb{Q}$. We say that $T$ and $J(T)$ are  \textit{parabolic} if $J(T)$ contains no critical points of $T$, but contains at least one rationally indifferent point.   Recall, that $\xi \in \hat{\mathbb{C}}$  is called a \emph{critical point} if  $T$ fails to be injective in any neighbourhood of $\xi$. This is our main case of interest and we assume throughout that $J(T)$ is parabolic unless we explicitly state otherwise.  We write  $\mathbf{\Omega}$  to denote the finite set of parabolic   points of $T$ and let
\[\mathbf{\Omega}_0 = \{\xi \in \mathbf{\Omega} \mid T(\xi) = \xi, \ T'(\xi) = 1\}.\]
Recall that the set of parabolic points is   finite because every parabolic cycle must attract at least one critical point \cite[Theorem 9.3.2]{B2} and there are only finitely many critical points due to the Riemann--Hurwitz relation \cite[Corollary 2.7.2]{B2}.  As $J(T^{n}) = J(T)$ for every $n \in \mathbb{N}$ and $q$ is rational, we may assume without loss of generality that $\mathbf{\Omega} = \mathbf{\Omega_0}.$ 

It was proven in \cite{DU2} that $h = \haus J(T)$  is given by the smallest zero of the function $t \mapsto  P(T, -t\text{log} \size{T'})$ where $P$ is  the \textit{topological pressure}. A similar result for hyperbolic Julia sets with `smallest' replaced with `only' is often referred to as the \textit{Bowen--Manning--McCluskey formula}, see \cite{BOW, MM}.

We recall, see \cite{DU4, SU1}, that for each $\omega \in \mathbf{\Omega}$, we can find a ball $U_{\omega} = B(\omega,r_{\omega})$ with sufficiently small radius such that on $B(\omega,r_{\omega})$, there exists a unique holomorphic inverse branch $T_{\omega}^{-1}$ of $T$ such that $T_{\omega}^{-1}(\omega) = \omega$.  Further, we can choose $U_{\omega}$ to be so small for each $\omega \in \mathbf{\Omega}$ such that 
\begin{equation} \label{nicecondition}
(U_{\omega} \cup T(U_{\omega})) \cap U_{\omega'} = \emptyset
\end{equation} 
 whenever $\omega, \omega' \in \mathbf{\Omega}$ and $\omega \neq \omega'$. For a parabolic fixed point $\omega \in \mathbf{\Omega}$, the Taylor series of $T$ about $\omega$ is of the form
\[T(z) = z + a(z-\omega)^{\petal + 1} + O\big((z-\omega)^{\petal + 2}\big)\]
for some $a \neq 0$.  We call $\petal \in \mathbb{N}$ the \textit{petal number} of $\omega$, and we write
\begin{equation*}
\pmax = \max\{p(\omega) \mid \omega \in \mathbf{\Omega}\}.
\end{equation*} 
It was proven in \cite{ADU} that $h > \pmax/(1+\pmax)$ and $h<2$. We define the set of \textit{pre-parabolic points} $J_{p}(T)$ by 
\[
J_{p}(T) = \bigcup\limits_{k=0}^{\infty} T^{-k}(\mathbf{\Omega}).
\]
It was proven in \cite[Lemmas 2 and 3]{DU2} that there exists a constant $C>0$ such that to each $\xi \in J(T) \setminus J_{p}(T)$, we can associate a unique maximal sequence (also called the \textit{optimal sequence} at $\xi$) of integers $n_{j}(\xi)$ such that for each $j \in \mathbb{N}$, we have that $T^{n_{j}(\xi)}(\xi) \notin \bigcup_{\omega \in \mathbf{\Omega}} U_{\omega}$ and that the unique holomorphic inverse branch $T_{\xi}^{-n_{j}(\xi)}$ of $T^{n_{j}(\xi)}$ which sends $T^{n_{j}(\xi)}(\xi)$ back to $\xi$ is well-defined on $B(T^{n_{j}(\xi)}(\xi),C)$. We call $J_{r}(T) = J(T) \setminus J_{p}(T)$ the \textit{radial Julia set}.  Following \cite{SU1, SU2}, we define \[r_{j}(\xi) = \vert (T^{\nj}) ' (\xi) \vert^{-1} \]
and call the sequence $(\rj)_{j \in \mathbb{N}}$ the \textit{hyperbolic zoom} at $\xi$. Importantly, for all $\xi \in J_{r}(T)$, $r_{j}(\xi) \to  0$ as $j \to \infty$.  This was implicit in earlier work, e.g.~\cite{DU2,SU1}, but proved explicitly in, for example,   \cite[Lemma 26.2.5 (b)]{noninvertible}. We do not necessarily claim  that  $r_{j}(\xi) $ is decreasing, although this will be true if $|T'(z)| \geq 1$ for all $z \in J(T)$.  

Similarly, for each $\xi \in J_{p}(T)$, we can associate a unique maximal sequence (also called the \textit{optimal terminating sequence} at $\xi$) of integers $n_{j}(\xi)$ such that for each $j \in \{1,2,...,l(\xi)\}$ for some $l(\xi) \geq 1$, we have that $T^{n_{j}(\xi)}(\xi) \notin \bigcup_{\omega \in \mathbf{\Omega}} U_{\omega}$ if $j<l(\xi)$ and $T^{n_{l(\xi)}(\xi)}(\xi)=\omega$ for some $\omega \in \mathbf{\Omega}$ and that for $j<l(\xi)$, the unique holomorphic inverse branch $T_{\xi}^{-n_{j}(\xi)}$ of $T^{n_{j}(\xi)}$ which sends $T^{n_{j}(\xi)}(\xi)$ back to $\xi$ is well-defined on $B(T^{n_{j}(\xi)}(\xi),C)$. We can similarly associate its \textit{terminating hyperbolic zoom}  $(\rj)_{j \in \{1,\dots,l(\xi)\}}$. 

We also require the concept of a \textit{canonical ball}, see \cite{SU2}. Let $\omega \in \mathbf{\Omega}$, and let $I(\omega) = T^{-1}(\omega) \setminus \{\omega\}$. Then for each integer $n \geq 0$, we define the \textit{canonical radius} $r_\xi$ at $\xi \in T^{-n}(I(\omega))$ by \[ r_\xi = \vert (T^n)^{'}(\xi) \vert^{-1}\] and we call $B(\xi,r_\xi)$ the \textit{canonical ball}. We will use the fact that $r_\xi \approx r_{l(\xi)}$, where $r_{l(\xi)}$ is the last element in the terminating hyperbolic zoom at $\xi$. 

Due to work of Sullivan \cite{condynsys} and also Aaronson, Denker and Urba\'nski \cite{ADU, DU1, DU2}, it is known that there exists a unique $h$-conformal measure $m$ supported on $J(T)$, i.e. $m$ is a probability measure such that for each Borel set $F \subset J(T)$ on which $T$ is injective,
\[m(T(F)) = \int_{F} \vert T'(\xi) \vert^h  dm(\xi).\]
In \cite{DU4}, it was shown that $m$ has Hausdorff dimension $h$, and also that the box and packing dimensions of $J(T)$ are equal to $h$. It also follows from, for example, \cite{SU2} that $m$ is exact dimensional and therefore its packing  dimension is also given by $h$.  We derive formulae for  the Assouad and lower dimensions and spectra of $J(T)$ and $m$ in Theorems \ref{juliam} and \ref{julias}.  

It was shown in \cite{SU1} that $m$ has an associated global measure formula which we will make use of throughout.
\begin{thm}[Global Measure Formula]
\label{Global2}
Let $T$ be a parabolic rational map with Julia set $J(T)$ of Hausdorff dimension $h$. Let $m$ denote the associated h-conformal measure supported on $J(T)$. Then there exists a function $\phi:J(T) \times \mathbb{R}^{+} \rightarrow \mathbb{R}^{+}$ such that for all $\xi \in J(T)$ and $0<r<\size{J(T)}$,
\[m(B(\xi,r)) \approx r^h \phi(\xi,r).\]
The values of $\phi$ are determined as follows:

\emph{i)} Suppose  $\xi \in J_r(T)$ has associated optimal sequence $(n_j(\xi))_{j \in \mathbb{N}}$ and hyperbolic zoom $(r_j(\xi))_{j \in \mathbb{N}}$ and $r$ is such that $\rjj \leq r < \rj$ for some $j \in \mathbb{N}$ and   $T^{k}(\xi) \in U_{\omega}$ for all $   n_j(\xi)<k<n_{j+1}(\xi)$ and  some $\omega \in \mathbf{\Omega}$.  Then 
\begin{equation*}
\phi(\xi,r) =  \left\{
        \begin{array}{ll}
      \left(\frac{r}{\rj}\right)^{(h-1)\petal}  & \quad  r\geq  \rj\left(\frac{\rjj}{\rj}\right)^{\frac{1}{1+\petal}} \\
      \left(\frac{\rjj}{r}\right)^{h-1} & \quad  r \leq \rj\left(\frac{\rjj}{\rj}\right)^{\frac{1}{1+\petal}}.
        \end{array}
    \right.
\end{equation*}
On the other hand, if $n_j(\xi)$ and $n_{j+1}(\xi)$ are consecutive integers, then $\phi(\xi,r)  = 1$.

\emph{ii)} Suppose $\xi \in J_p(T)$ has  associated optimal terminating sequence $(n_j(\xi))_{j = 1,\dots,l(\xi)}$ and hyperbolic zoom $(r_j(\xi))_{j =1,\dots,l(\xi)}$. Suppose  $T^{n_{l(\xi)}(\xi)}(\xi) = \omega$ for some $\omega \in \mathbf{\Omega}$. If $r \geq r_{l(\xi)}(\xi)$, the values of $\phi$ are determined as in the radial case, and if $r \leq r_{l(\xi)}(\xi)$, then
\[\phi(\xi,r) = \left(\frac{r}{r_{l(\xi)}(\xi)}\right)^{(h-1)\petal}.\]
\end{thm}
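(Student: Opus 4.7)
The plan is to reduce, via the $h$-conformality of $m$ and the Koebe distortion theorem, the estimation of $m(B(\xi,r))$ to the estimation of the measure of a ball in a controlled neighbourhood of a parabolic point $\omega$, and then to compute the latter using a quantitative form of the Leau--Fatou flower theorem.

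First I would pull back through the inverse branch $T^{-\nj}_\xi$, which by the definition of the hyperbolic zoom is well defined and univalent on the ball $B(T^{\nj}(\xi),C)$ of fixed radius. The Koebe distortion theorem then guarantees bounded distortion on a slightly smaller ball, and combining this with the $h$-conformality relation $m(T(F)) = \int_F |T'|^h \, dm$ yields
$$m(B(\xi,r)) \approx \rj^{\,h} \cdot m\bigl(B(T^{\nj}(\xi),\, r/\rj)\bigr)$$
for $\rjj \le r < \rj$. This reduces the problem to estimating $m(B(y,s))$ for the point $y := T^{\nj}(\xi) \in U_\omega$ and rescaled scale $s := r/\rj \in [\rjj/\rj, 1)$, where by construction the further iterates $T^k(y)$ stay in $U_\omega$ for $0 \le k < n_{j+1}(\xi)-\nj$ and then escape $U_\omega$.

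The main step is then a local analysis near $\omega$. Using the Taylor expansion $T(z) = z + a(z-\omega)^{\petal+1} + \cdots$ together with the flower theorem, one would extract the standard asymptotic $|T^n(y) - \omega| \asymp n^{-1/\petal}$ for $y$ lying in an attracting petal, along with the complementary fact that a point at distance $\rho$ from $\omega$ has roughly $\rho^{-\petal}$ backward preimages on the petal piling up towards $\omega$ at comparable scales. Combining these geometric estimates with conformality, one can compute $m(B(\omega,s))$ and, more generally, $m(B(y,s))$ for $y$ at distance $\rho$ from $\omega$ and various $s$. The threshold $\rj (\rjj/\rj)^{1/(1+\petal)}$ appearing in the statement corresponds exactly to the scale at which the ball $B(\xi,r)$ transitions from only seeing the piece of Julia set `outgoing' from $T^{\nj}(\xi)$ along a single petal (yielding the exponent $(h-1)\petal$) to also catching the dense pile-up of preimages accumulating near $\omega$ (yielding the exponent $h-1$). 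The pre-parabolic case ii) follows from the same pullback, but now terminates at an exact parabolic point, where only the first regime of the dichotomy survives.

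The hard part will be the quantitative flower theorem: turning the formal expansion $T(z) = z + a(z-\omega)^{\petal+1} + \cdots$ into sharp two-sided estimates on $|T^n(y) - \omega|$ and on the diameters of the preimage components along the petals. This requires a reasonably delicate Fatou-coordinate analysis to control distortion uniformly in $n$, and it is precisely this step that dictates the exponent $1/(1+\petal)$ appearing in the threshold. Once these geometric estimates are in hand, the remainder of the argument is a bookkeeping exercise in Koebe distortion and conformality.
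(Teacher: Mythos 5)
The paper does not actually prove Theorem \ref{Global2}: it is quoted directly from Stratmann and Urba\'nski \cite{SU1} (building on \cite{DU2, DU4}), so there is no internal proof to compare against. Your outline does follow the strategy of that source: pull back by the inverse branch $T_\xi^{-n_j(\xi)}$ using $h$-conformality and the Koebe distortion theorem to reduce to a ball of radius $r/\rj$ about $T^{n_j(\xi)}(\xi)$, then analyse the measure near the parabolic point via the orbit asymptotics $|T_\omega^{-n}(z)-\omega| \approx n^{-1/\petal}$ and the corresponding derivative growth $\approx n^{(1+\petal)/\petal}$ supplied by the flower theorem. So the skeleton is the right one.

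Two caveats. First, your heuristic for the dichotomy is inverted. Writing $N = n_{j+1}(\xi)-n_j(\xi)$, one has $\rjj/\rj \approx N^{-(1+\petal)/\petal}$, and the closest approach of the orbit segment to $\omega$ is at distance $\approx N^{-1/\petal} \approx (\rjj/\rj)^{1/(1+\petal)}$; so the threshold $\rj(\rjj/\rj)^{1/(1+\petal)}$ is exactly the scale at which the rescaled ball first engulfs $\omega$. It is therefore the regime $r$ \emph{above} the threshold --- the one with exponent $(h-1)\petal$ --- in which the ball captures the full pile-up of preimages accumulating at $\omega$, its measure being comparable to that of a ball centred at a pre-parabolic point as in part (ii); \emph{below} the threshold the ball misses $\omega$ and sees an essentially one-dimensional array of canonical balls, whence $m(B(\xi,r)) \approx r\,\rjj^{h-1}$, i.e.\ $\phi(\xi,r) \approx (\rjj/r)^{h-1}$. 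Your sketch attaches these two geometric pictures to the wrong exponents. Second, the proposal explicitly defers the entire quantitative content --- uniform two-sided estimates on $|T^n(y)-\omega|$ and $|(T^n)'(y)|$ near $\omega$, and the summation of the contributions of the canonical balls met by $B(\xi,r)$ --- to ``a reasonably delicate Fatou-coordinate analysis''. That deferred step is where the theorem actually lives, so as written this is a correct road map to the argument of \cite{SU1} rather than a proof.
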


Since we do not necessarily have that $r_{j}(\xi)$ is decreasing, it may be the case that, for a particular $r>0$ and $\xi$,  $\rjj \leq r < \rj$ for more than one $j \in \mathbb{N}$.  In this case, the global measure formula gives different expressions for the measure of $m(B(\xi,r)) $ but these are all comparable up to uniform constants. 

Further, observe that the $p(\omega)$ used in the definition of $\phi$ is well-defined due to \eqref{nicecondition}.  In particular, the forward  orbit of $\xi$ cannot jump from $U_\omega$ to $U_{\omega'}$ with $\omega \neq \omega'$ in one step.  Instead, the orbit will enter some $U_\omega$ and stay there for some number of steps (during which time the parabolic fluctuations in $m(B(\xi,r))$ depend on $\omega$) and then it will leave (at which time  the parabolic fluctuations disappear) before later re-entering another (possibly different) $U_\omega'$. Given this, we may think of $\phi(\xi,r)$ as being defined via a particular $\omega$ in the cases where it is not equal to 1. 

If $J(T)$ contains no parabolic points nor critical points, then it is hyperbolic and
\[\aso J(T) = \low J(T) = \aso m = \low m =\boxd m = h\]
and
\[
\asospec J(T) = \asospec m = \lowspec J(T) = \lowspec m = h
\]
for all $\theta \in (0,1)$. Here $m$ is (again) the unique $h$-conformal measure for $T$ which in this case is Ahlfors regular.  See, for example, \cite[Theorem 4]{condynsys}.

 A final case of interest is when $J(T)$ contains a non-linearisable  \emph{irrationally} indifferent fixed point (a \emph{Cremer point}).  In this case the Jacobian derivative of $T$ at the Cremer point is an irrational rotation and  $T$ is not linearisable in a neighbourhood of the Cremer point.  Such rational maps $T$ are not parabolic, but we consider them below in Theorem \ref{cremer}. Julia sets with Cremer points are notoriously difficult to study and it is conjectured that they should have Hausdorff dimension 2 (even positive Lebesgue measure).  See \cite{cheraghi} for more discussion of this problem and some partial results.

\section{Main results: dimension theory of Julia sets}\label{JuliaResults}

Our first result gives a comprehensive description of the dimension theory of the  $h$-conformal measures described above.

\begin{thm}\label{juliam} Let  $m$ be   the    $h$-conformal measure  associated with a parabolic rational map and let $\theta \in (0,1)$. Then
\begin{enumerate}[label=(\roman*)]
\item  $\normalfont{\boxd} m =  \max\{h,h+(h-1)\pmax\}$.\label{thmm1}

\item $\normalfont{\aso} m = \max\{1,h+(h-1)\pmax\}$.\label{thmm2}

\item $\normalfont{\low} m = \min\{1,h+(h-1)\pmax\}$.\label{thmm3}

\item If $h\leq1$, then 
\[\normalfont{\asospec} m = h+\min\left\{1,\frac{\theta \, \pmax}{1-\theta}\right\}(1-h), \] 

and if $h\geq 1$, then  $\normalfont{\asospec} m = h+(h-1)\pmax.$ \label{thmm4}

\item  If $h\leq1$, then  $\normalfont{\lowspec} m = h+(h-1)\pmax$, and if $h \geq 1$, then 
\[\normalfont{\lowspec} m = h+\min\left\{1,\frac{\theta \, \pmax}{1-\theta}\right\}(1-h).\]
\label{thmm5}
\end{enumerate}
\end{thm}
We prove Theorem \ref{juliam} in Section \ref{juliamproof}.  Many interesting quantitative features of the dimension theory of $m$ can be read from  Theorem \ref{juliam}.  For example, the different dimensions may take on different values in certain explicit cases and the Assouad and lower spectra have a phase transition at $1/(1+\pmax)$ in cases when they are not constant. 

Next we turn our attention to the  Julia set  $J(T)$.  

\begin{thm}\label{julias} Let $T$ be  a parabolic rational map with Julia set $J(T)$ and let $\theta \in (0,1)$. Then
\begin{enumerate}[label=(\roman*)]
\item $\normalfont{\aso} J(T) = \max\{1,h\}$. \label{thmj1}

\item  $\normalfont{\low} J(T) = \min\{1,h\}$.\label{thmj2}

\item  If $h\leq1$, then \[\normalfont{\asospec} J(T) = h+\min\left\{1,\frac{\theta \, \pmax}{1-\theta}\right\}(1-h), \] \label{thmj3}
and if $h\geq 1$, then $\normalfont{\asospec} J(T) = h .$ 

\item  If $h\leq1$, then $\normalfont{\lowspec} J(T) = h$, and if $h \geq 1$, then \[\normalfont{\lowspec} J(T) = h+\min\left\{1,\frac{\theta \, \pmax}{1-\theta}\right\}(1-h).\]\label{thmj4}
\end{enumerate}
\end{thm}
We prove Theorem \ref{julias} in Section \ref{juliasproof}.   Again, there are several intriguing features which can be read from Theorem \ref{julias}.  For example, the Assouad dimension necessarily lies in the half-open interval $[1,2)$ and is distinct from $h$ whenever $h<1$.  The fact that $\normalfont{\aso} J(T) <2$  can be viewed as a refinement of the fact that $J(T)$ is porous, see \cite[Theorem 1.4]{LG}.  Moreover, the Assouad and lower dimensions may differ from the Hausdorff dimension, but yet it is not possible for the three dimensions to be distinct simultaneously. Further,  the Assouad and lower spectra again have a phase transition at $1/(1+\pmax)$ in cases when they are not constant. 

 It is   perhaps noteworthy that in the special case $h=1$ all of the formulae in Theorems \ref{juliam} and  \ref{julias}    reduce to $1$.  This can already be seen from the global measure formula, Theorem \ref{Global2}, because the conformal measure $m$ is Ahlfors regular of dimension 1.  In particular, in the case $h=1$ there is nothing to prove.

The case where the rational map is not parabolic (or hyperbolic)  remains an interesting open programme.  For example, we do not know how to compute the Assouad dimension when the rational map  has a Herman ring or a Siegel disk. McMullen \cite{mcmullensiegel} constructed examples of quadratic polynomials with Siegel disks whose Julia set is porous.  In particular, the Assouad (and Hausdorff) dimension of the Julia set is strictly less than 2 by \cite[Theorem 5.2]{LK}, where it was proved that a set in $\mathbb{R}^d$ is porous if and only if its Assouad dimension is strictly less than $d$. 

 We obtain the following  result for Julia sets with Cremer points. Given that such  Julia sets are conjectured to have Hausdorff dimension 2, one may obtain a weaker conjecture by replacing Hausdorff dimension with a larger notion of fractal dimension. Here we resolve the weakest version of this conjecture by replacing Hausdorff dimension with Assouad dimension.   

\begin{thm} \label{cremer}
If $T$ is a rational map and $J(T)$ contains a Cremer point, then $\normalfont{\aso} J(T)   =2$.
\end{thm}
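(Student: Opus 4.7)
The upper bound $\aso J(T) \leq 2$ is immediate since $J(T) \subseteq \mathbb{R}^2$, so the whole task is to prove $\aso J(T) \geq 2$. The plan is to exhibit the closed unit disk $\overline{B(0,1)}$ as a \emph{weak tangent} to $J(T)$ at the Cremer point $\xi$: by the standard weak tangent principle for Assouad dimension (see \cite[Section 5.1]{Fr2}), this suffices. After replacing $T$ by $T^p$ (which preserves the Julia set and hence the Assouad dimension), I will assume $\xi$ is a fixed point with $T'(\xi) = \lambda = e^{2\pi i \alpha}$ for some irrational $\alpha$.

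The essential input is the hedgehog theorem of P\'erez-Marco: for each sufficiently small $r>0$ (say, those for which $\partial B(\xi,r)$ is disjoint from its image under $T$, which is a cofinal set of scales since $T$ is a near-rotation on a neighbourhood of $\xi$), there is a compact connected set $K_r \subseteq \overline{B(\xi,r)}$ with $\xi \in K_r$, $K_r \cap \partial B(\xi,r) \neq \emptyset$, $\hat{\mathbb{C}} \setminus K_r$ connected, and $T(K_r)=K_r$. Because $\xi$ is a Cremer point, $T$ is non-linearisable at $\xi$, and P\'erez-Marco's theorem then additionally yields that $K_r$ has empty interior; a standard argument using the classification of periodic Fatou components (no Siegel/attracting/parabolic/Herman cycle can approach the Cremer point appropriately) then gives $K_r \subseteq J(T)$.

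Now I rescale. Set $\tilde K_r = (K_r - \xi)/r \subseteq \overline{B(0,1)}$ and $T_r(w) = (T(\xi + rw) - \xi)/r$; then $\tilde K_r$ is compact, connected, contains $0$, meets $\partial B(0,1)$, and satisfies $T_r(\tilde K_r) = \tilde K_r$. A Taylor expansion gives
\[
T_r(w) = \lambda w + r\, c_2 w^2 + r^2 c_3 w^3 + \cdots,
\]
so $T_r \to R_\alpha$, where $R_\alpha(w) = \lambda w$, uniformly on compact subsets of $\mathbb{C}$ as $r \to 0$. Pick any sequence $r_n \to 0$ along which hedgehogs exist, and extract a Hausdorff-convergent subsequence $\tilde K_{r_n} \to K^*$ in $\overline{B(0,1)}$. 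Passing to the limit in $T_{r_n}(\tilde K_{r_n}) = \tilde K_{r_n}$ using the uniform convergence $T_{r_n} \to R_\alpha$ shows that $K^*$ is $R_\alpha$-invariant; it is also compact, connected, contains $0$, and meets $\partial B(0,1)$.

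To finish, I use the structure of compact $R_\alpha$-invariant sets in the plane: since $\alpha$ is irrational, every orbit of $R_\alpha$ is dense on its circle, so any compact $R_\alpha$-invariant set is a union of concentric circles centred at $0$ (together possibly with $0$ itself). A connected such set that contains $0$ and meets $\partial B(0,1)$ must equal $\overline{B(0,1)}$. Thus $K^* = \overline{B(0,1)}$. Since $\tilde K_{r_n} \subseteq (J(T)-\xi)/r_n$, it follows that $\overline{B(0,1)}$ is a weak tangent to $J(T)$ at $\xi$, and therefore $\aso J(T) \geq \aso \overline{B(0,1)} = 2$. The main obstacle is the careful invocation of P\'erez-Marco's hedgehog theorem --- in particular the empty-interior assertion that places $K_r$ inside $J(T)$, which is what ultimately allows the rescaled limit to lie inside the rescaled Julia set.
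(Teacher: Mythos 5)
Your proof is correct, but it follows a genuinely different route from the paper's. The paper extracts only a weak consequence of P\'erez-Marco's theorem \cite[Theorem 1]{hedgehog} --- that the Cremer point lies in a non-degenerate connected component of $J(T)$, hence $J(T)$ meets every small circle about the fixed point --- and then obtains angular density \emph{by hand}: it proves the quantitative iteration estimate $T^n(z)=e^{2\pi i n\alpha}z+5^nO(z^2)$ and chooses the scales $r=r(R)$ via the equidistribution quantity $M(\alpha,r/R)$ so that the orbit fills out the angles before the nonlinear error exceeds $r$, concluding directly that $J(T)$ is $(cr)$-dense in $B(0,R)$. You instead run a soft compactness argument: rescaled hedgehogs converge to a compact, connected, $R_\alpha$-invariant continuum joining $0$ to the unit circle, which must be the closed disc, and the weak tangent principle finishes. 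This buys you freedom from all quantitative bookkeeping (no error growth rates, no $M(\alpha,\delta)$), but at the cost of invoking the full hedgehog theorem --- existence of the fully invariant compacta $K_r$ with connected complement and empty interior, \emph{and} the inclusion $K_r\subseteq J(T)$ in the non-linearisable case. That inclusion is the crux and is a theorem of P\'erez-Marco which you should cite rather than re-derive; your sketch via the classification of Fatou components is essentially right, but note that ruling out rotation domains genuinely uses the connected-complement property (an invariant circle of a Siegel disc or Herman ring inside $K_r$ would force $K_r$ to contain the bounded complementary component, contradicting empty interior). Two small tidy-ups: the hedgehog theorem applies to every sufficiently small $r$ (you only need $T$ univalent near $\overline{B(\xi,r)}$, so the ``cofinal set of scales'' hedge is unnecessary), and at the end you should pass to a further subsequence along which $\bigl((J(T)-\xi)/r_n\bigr)\cap\overline{B(0,1)}$ itself converges --- the limit is sandwiched between $K^*=\overline{B(0,1)}$ and $\overline{B(0,1)}$, so it is a genuine weak tangent in the sense of \cite{Fr2}. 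With those citations and adjustments in place, your argument is a valid alternative proof.
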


We prove Theorem \ref{cremer} in Section \ref{cremerproof}.  The proof of Theorem \ref{cremer} is self-contained and does not rely on, e.g. a global measure formula.

\section{Conformal measure: proof of Theorem \ref{juliam} } \label{juliamproof}

\subsection{Box dimension of $m$: proof of Theorem \ref{juliam} part \ref{thmm1}}\label{BoxM}

Clearly $\lbox m \geq \lbox J(T) = h$.  Moreover,  if $h \leq 1$, then  for all  $\xi \in J(T)$ and $r < \size{J(T)}$
\begin{equation*}
m(B(\xi,r)) \approx r^h \phi(\xi,r) \gtrsim r^h
\end{equation*}
giving $\ubox m \leq h$. On the other hand, suppose $h \geq 1$, and let $\xi =\omega \in \Omega \subseteq  J_p(T)$  with $\petal = \pmax$, noting that  $T^{k}(\omega) = \omega$ for all $k \geq 1$.  Then, using Theorem \ref{Global2},  for all sufficiently small $r>0$
\begin{equation*}
m(B(\omega,r)) \lesssim r^{h+(h-1)\pmax}
\end{equation*}
which proves $\lbox m \geq h+(h-1)\pmax$. The upper bound in this case will follow from the more general upper bound for Assouad dimension and is therefore omitted, see Section \ref{AsoM} below.  That is,
 \begin{equation*}
\ubox m \leq \aso m \leq h+(h-1)\pmax.
\end{equation*}
 In all cases we conclude $\lbox m = \ubox m = \max\{h,h+(h-1)\pmax\}$ as required.

\subsection{Assouad dimension of $m$: proof of Theorem \ref{juliam} part \ref{thmm2}}\label{AsoM}

The lower bound will follow from our lower bound for the Assouad spectrum of $m$, see Section \ref{AsospecM}.  Therefore we only need to prove the upper bound.  That is, we show \[\aso m \leq \max\{1,h+(h-1)\pmax\}.\]
We use Theorem \ref{Global2} throughout the proof without mentioning it explicitly.  Consider $0<r<R<\vert J(T) \vert$. Suppose $\xi \in J_r(T)$, with associated optimal sequence $(n_j(\xi))_{j \in \mathbb{N}}$ and hyperbolic zoom $(r_j(\xi))_{j \in \mathbb{N}}$. The case where  $\xi \in J_p(T)$ follows similarly and is omitted. 

Suppose that $\rjj \leq r < R < \rj$ and that $T^{k}(\xi) \in U_{\omega} = B(\omega,r_{\omega})$ for some $\omega \in \mathbf{\Omega}$ for $n_j(\xi)<k<n_{j+1}(\xi)$, and let $r_m = \rj\left({\rjj}/{\rj}\right)^{\frac{1}{1+\petal}}$. 

If $r > r_m$, then
\begin{align*}
\frac{m(B(\xi,R))}{m(B(\xi,r))}  \approx \left(\frac{R}{r}\right)^h \frac{\left(R/\rj\right)^{(h-1)\petal}}{\left(r/\rj \right)^{(h-1)\petal}} 
\leq \left(\frac{R}{r}\right)^{\max\{1,h+(h-1)\pmax\}}.
\end{align*}

If $R < r_m$, then
\begin{align*}
\frac{m(B(\xi,R))}{m(B(\xi,r))}  \approx  \left(\frac{R}{r}\right)^h  \frac{\left(\rjj/R\right)^{h-1}}{\left(\rjj/r\right)^{h-1}} 
= \frac{R}{r}.
\end{align*}

If $r \leq r_m \leq R$, then
\begin{align}\label{form1}
\frac{m(B(\xi,R))}{m(B(\xi,r))}  \approx  \left(\frac{R}{r}\right)^h \frac{\left(R/\rj\right)^{(h-1)\petal}}{\left(\rjj/r\right)^{h-1}}
=  \frac{R^{h+(h-1)\petal}}{r\rj^{(h-1)\petal}\rjj^{h-1}}.
\end{align}
If we assume that $h \geq 1$, then note that 
\begin{align*}
\left(\frac{r}{R} \right)^{h+(h-1)\petal}\frac{m(B(\xi,R))}{m(B(\xi,r))} \approx \frac{ r^{(h-1)(1+\petal)}}{\rj^{(h-1)\petal}\rjj^{h-1}}
\end{align*}
is maximised when $r = r_m$. Therefore
\begin{align*}
\left(\frac{r}{R} \right)^{h+(h-1)\petal}\frac{m(B(\xi,R))}{m(B(\xi,r))} \lesssim  \frac{{r_m}^{(h-1)(1+\petal)}}{\rj^{(h-1)\petal}\rjj^{h-1}}
= \frac{ \rj^{(h-1)(1+\petal)} \left(\frac{\rjj}{\rj} \right)^{h-1}}{\rj^{(h-1)\petal}\rjj^{h-1}} 
= 1,
\end{align*}
which proves that
\begin{align*}
\frac{m(B(\xi,R))}{m(B(\xi,r))} \lesssim \left(\frac{R}{r} \right)^{h+(h-1)\petal} \leq \left(\frac{R}{r} \right)^{h+(h-1)\pmax}.
\end{align*}
Similarly, if $h \leq 1$, then by (\ref{form1})

\begin{align*}
\left(\frac{r}{R} \right)\frac{m(B(\xi,R))}{m(B(\xi,r))} \approx  \frac{R^{(h-1)(1+\petal)}}{\rj^{(h-1)\petal}\rjj^{h-1}}
\end{align*}
is maximised when $R=r_m$. Therefore, as above
\begin{align*}
\left(\frac{r}{R} \right)\frac{m(B(\xi,R))}{m(B(\xi,r))} \lesssim 1
\end{align*}
which proves
\begin{align*}
\frac{m(B(\xi,R))}{m(B(\xi,r))} \lesssim \frac{R}{r}.
\end{align*}
This covers all cases when $\rjj \leq r < R < \rj$.

Now, we consider the case when $\rjj \leq R < \rj$, $\rll \leq r < \rl$, for some $l>j$, $T^{k}(\xi) \in U_{\omega_1}$ if $n_j(\xi)<k<n_{j+1}(\xi)$ and $T^{k}(\xi) \in U_{\omega_2}$ if $n_l(\xi)<k<n_{l+1}(\xi)$ for some $\omega_1,\omega_2 \in \mathbf{\Omega}$. Note that we can assume  in this case that $\rl \leq R$ and $\rjj \geq r$ (otherwise we are covered by the previous case); consequently, $\rl \leq \rj$ and $\rll \leq \rjj$.   \\
Let $r_m = \rj\left({\rjj}/{\rj}\right)^{\frac{1}{1+p(\omega_1)}}$ and $r_n =\rl\left({\rll}/{\rl}\right)^{\frac{1}{1+p(\omega_2)}}$.\\

\noindent \textit{Case 1}: $R>r_m$, $r>r_n$.

We have
\begin{equation}\label{form2}
\frac{m(B(\xi,R))}{m(B(\xi,r))} \approx  \left(\frac{R}{r}\right)^h  \frac{\left(R/\rj\right)^{(h-1)p(\omega_1)}}{\left(r/\rl\right)^{(h-1)p(\omega_2)}}. 
\end{equation}
If $h \leq 1$, then, using $r \leq \rjj$,
\begin{align*}
 \left(\frac{r}{R}\right) \frac{m(B(\xi,R))}{m(B(\xi,r))}  \lesssim \frac{R^{(h-1)(1+p(\omega_1))}}{r^{h-1}\rj^{(h-1)p(\omega_1)}} 
&\leq \frac{r_m^{(h-1)(1+p(\omega_1))}}{\rjj^{h-1}\rj^{(h-1)p(\omega_1)}} \\
&=\frac{\rj^{(h-1)(1+p(\omega_1))}\left(\frac{\rjj}{\rj}\right)^{h-1}}{\rjj^{h-1}\rj^{(h-1)p(\omega_1)}}
= 1,
\end{align*}
 and if $h \geq 1$, then by (\ref{form2})
\begin{align*}
\frac{m(B(\xi,R))}{m(B(\xi,r))}  \lesssim \left(\frac{R}{r}\right)^h \left(\frac{\rl}{r}\right)^{(h-1)p(\omega_2)}
\leq \left(\frac{R}{r} \right)^{h+(h-1)p(\omega_2)} 
\end{align*}
using $\rl \leq R$.\\

\noindent \textit{Case 2}: $R\leq r_m$, $r \leq r_n$.

We have 
\begin{equation}\label{form3}
\frac{m(B(\xi,R))}{m(B(\xi,r))} \approx  \left(\frac{R}{r}\right)^h  \frac{\left(\rjj/R\right)^{h-1}}{\left(\rll/r\right)^{h-1}}.
\end{equation}
If $h \leq 1$, then, using $\rll \leq \rjj$,
\begin{align*}
\frac{m(B(\xi,R))}{m(B(\xi,r))} \approx  \left(\frac{R}{r}\right)  \frac{\rjj^{h-1}}{\rll^{h-1}}
\leq  \frac{R}{r},
\end{align*}
and if $h \geq 1$, then by (\ref{form3})
\begin{equation*}
\frac{m(B(\xi,R))}{m(B(\xi,r))} \lesssim \frac{R^h}{r \rll^{h-1}} 
\end{equation*}
and therefore
\begin{align*}
 \left(\frac{r}{R}\right)^{h+(h-1)p(\omega_2)} \frac{m(B(\xi,R))}{m(B(\xi,r))} &\lesssim \frac{R^{(1-h)p(\omega_2)}}{r^{(1-h)(1+p(\omega_2))} \rll^{h-1}}\\ 
&\leq \frac{\rl^{(1-h)p(\omega_2)}}{r_n^{(1-h)(1+p(\omega_2))} \rll^{h-1}} \\
&= \frac{\rl^{(1-h)p(\omega_2)}}{\rl^{(1-h)(1+p(\omega_2))}\left(\frac{\rll}{\rl}\right)^{1-h} \rll^{h-1}}
=1.
\end{align*}
\textit{Case 3}: $R> r_m$, $r \leq r_n$.

We have
\begin{align}\label{form4}
\frac{m(B(\xi,R))}{m(B(\xi,r))} &\approx  \left(\frac{R}{r}\right)^h  \frac{\left(R/\rj\right)^{(h-1)p(\omega_1)}}{\left(\rll/r\right)^{(h-1)}} 
= \frac{R^{h+(h-1)p(\omega_1)}}{r\rj^{(h-1)p(\omega_1)}\rll^{h-1}}.
\end{align}
If $h \leq 1$, then 
\begin{align*}
 \left(\frac{r}{R}\right) \frac{m(B(\xi,R))}{m(B(\xi,r))} 
 \approx \frac{R^{(h-1)(1+p(\omega_{1}))}}{\rj^{(h-1)p(\omega_1)}\rll^{h-1}} 
 &\leq \frac{\rj^{(h-1)(1+p(\omega_{1}))}\left(\frac{\rjj}{\rj}\right)^{h-1}}{\rj^{(h-1)p(\omega_1)}\rjj^{h-1}} 
= 1.
\end{align*}
If $h \geq 1$ and $p(\omega_1) \geq p(\omega_2)$, then by (\ref{form4})
\begin{align*}
 \left(\frac{r}{R}\right)^{h+(h-1)p(\omega_1)} \frac{m(B(\xi,R))}{m(B(\xi,r))} &\approx \frac{r^{(h-1)(1+p(\omega_{1}))}}{\rj^{(h-1)p(\omega_1)}\rll^{h-1}}\\
&\leq \frac{\rl^{(h-1)(1+p(\omega_{1}))}\left(\frac{\rll}{\rl}\right)^{\frac{(h-1)(1+p(\omega_1))}{1+p(\omega_2)}}}{\rl^{(h-1)p(\omega_1)}\rll^{h-1}} \\
&= \left(\frac{\rl}{\rll}\right)^{(h-1)(1-\frac{1+p(\omega_1)}{1+p(\omega_2)})} 
\leq 1,
\end{align*}
and if $h \geq 1$ and $p(\omega_1) < p(\omega_2)$, then by (\ref{form4}), using $\rl \leq \rj$
\begin{align*}
 \left(\frac{r}{R}\right)^{h+(h-1)p(\omega_2)} \frac{m(B(\xi,R))}{m(B(\xi,r))} &\approx \frac{r^{(h-1)(1+p(\omega_{2}))}R^{(h-1)(p(\omega_1)-p(\omega_2))}}{\rj^{(h-1)p(\omega_1)}\rll^{h-1}}\\
&\leq  \frac{r_n^{(h-1)(1+p(\omega_{2}))}R^{(h-1)(p(\omega_1)-p(\omega_2))}}{\rl^{(h-1)p(\omega_1)}\rll^{h-1}}\\
&= \left(\frac{R}{\rl}\right)^{(h-1)(p(\omega_1)-p(\omega_2))}  
\leq 1.
\end{align*}
\textit{Case 4}: $R\leq r_m$, $r > r_n$.

This gives
\begin{equation}\label{form5}
\frac{m(B(\xi,R))}{m(B(\xi,r))} \approx  \left(\frac{R}{r}\right)^h  \frac{\left(\rjj/R\right)^{h-1}} {\left(r/\rl\right)^{(h-1)p(\omega_2)}}.
\end{equation}
Although we cannot guarantee $r_l(\xi) \leq r_{j+1}(\xi)$, it is true that $r_l(\xi) \lesssim  r_{j+1}(\xi)$ in the case $h \neq 1$, and this can be seen as follows.  Suppose $r_l(\xi) >  r_{j+1}(\xi)$ in which case $r_{l+1}(\xi) <r_n< r\leq  r_{j+1}(\xi) <r_l(\xi)  $ and then, by  two applications of Theorem \ref{Global2} using different `windows',
\begin{equation*}
r_{j+1}(\xi)^h \approx m(B(\xi, r_{j+1}(\xi) ) \approx r_{j+1}(\xi)^h \left(\frac{r_{j+1}(\xi)}{r_l(\xi)}\right)^{(h-1)p(\omega_2)}
\end{equation*}
which gives the claim. If $h < 1$, then, using that $r_l(\xi) \lesssim  r_{j+1}(\xi)$,
\begin{align*}
 \left(\frac{r}{R}\right) \frac{m(B(\xi,R))}{m(B(\xi,r))} \approx \frac{r^{(1-h)(1+p(\omega_2))}}{\rjj^{1-h}\rl^{(1-h)p(\omega_2)}} 
\lesssim \frac{\rl^{(1-h)(1+p(\omega_2))}}{\rl^{1-h}\rl^{(1-h)p(\omega_2)}}
= 1,
\end{align*}
and if $h \geq 1$, then by (\ref{form5})
\begin{align*}
\frac{m(B(\xi,R))}{m(B(\xi,r))} \lesssim  \left(\frac{R}{r}\right)^h \left(\frac{\rl}{r}\right)^{(h-1)p(\omega_2)} 
\leq   \left(\frac{R}{r}\right)^{h+(h-1)p(\omega_2)}.
\end{align*}
In all possible cases, 
\begin{align*}
\frac{m(B(\xi,R))}{m(B(\xi,r))} \lesssim  \left(\frac{R}{r}\right)^{\max\{1, h+(h-1)p(\omega_1), h+(h-1)p(\omega_2)\}} \leq \left(\frac{R}{r}\right)^{\max\{1, h+(h-1)\pmax\}}
\end{align*}
which proves the desired upper bound.


\subsection{Lower dimension of $m$: proof of Theorem \ref{juliam} part \ref{thmm3}}\label{LowM}


The upper bound will follow from our upper  bound for the lower spectrum of $m$, see Section \ref{LowspecM}.  Therefore we only need to prove the lower bound. However, the lower bound for $\low m$  can be proved by a completely analogous argument to that given for the upper bound for $\aso m$ in the
previous section, and so we leave it for the reader.  In particular, the roles of  $h \leq 1$ and $h \geq 1$ are reversed which reverses many of the inequalities.

\subsection{Assouad spectrum of $m$: proof of Theorem \ref{juliam} part \ref{thmm4}}\label{AsospecM}

\subsubsection{When $h < 1$}\label{AsospecM1} 

The lower bound here  follows from the lower bound for the Assouad spectrum of $J(T)$, see Section \ref{AsospecJ1}.  Therefore it remains to prove the upper bound, that is, we show \[\asospec m \leq h+\min\left\{1,\frac{\theta\pmax}{1-\theta}\right\}(1-h) .\]
The case when $\theta \geq {1}/{(1+\pmax)}$ follows easily, as 
\[\asospec m \leq \aso m \leq 1,\] 
so we assume $\theta < {1}/{(1+\pmax)}$. Let $\xi \in J_r(T)$, and $r>0$ be small and assume that $\rjj \leq r^\theta < \rj$, $\rll \leq r < \rl$, with $l \geq j$, $T^{k}(\xi) \in U_{\omega_1}$ for $n_j(\xi) <k<n_{j+1}(\xi)$ and $T^{k}(\xi) \in U_{\omega_2}$ for $n_l(\xi) <k<n_{l+1}(\xi)$ for some $\omega_1,\omega_2 \in \mathbf{\Omega}$.  The case $\xi \in J_p(T)$ is similar and omitted. Note that $m(B(\xi,R)) \gtrsim r^h$ and we may assume $\rj \lesssim 1$. To justify this latter fact, observe that
\begin{equation} \label{zoombounded}
\min\{r_j(\xi), 1/r_{j+1}(\xi)\} \lesssim 1.
\end{equation}
In particular, for sufficiently small $r$, $\rjj \leq r^\theta < \rj$ ensures that $\rj \lesssim 1$.  To establish \eqref{zoombounded}, suppose  $\min\{r_j(\xi), 1/r_{j+1}(\xi)\} \geq K > 1$.  Then $r_{j+1}(\xi) <1 < r_{j}(\xi)$ and Theorem \ref{Global2} implies
\[
1 \approx m(B(\xi,1)) \approx \phi(\xi, 1) 
\]
but  the right hand side is not comparable to 1 as $K \to \infty$, recalling that  $h \neq 1$.

Let $r_m = \rj\left({\rjj}/{\rj}\right)^{\frac{1}{1+p(\omega_1)}}$. If $r^\theta > r_m$, then by  Theorem \ref{Global2}
\begin{align*}
\frac{m(B(\xi,r^\theta))}{m(B(\xi,r))} \lesssim \left(\frac{r^\theta}{r}\right)^h \left(\frac{r^\theta}{\rj}\right)^{(h-1)p(\omega_1)} 
\leq  \left(\frac{r^\theta}{r}\right)^h \left(\frac{r^\theta}{\rj}\right)^{(h-1) \pmax} &\lesssim \left(\frac{r^\theta}{r}\right)^h r^{\theta(h-1)\pmax} \\
&=\left(\frac{r^\theta}{r}\right)^{h+\frac{\theta\pmax}{1-\theta}(1-h)},
\end{align*}
and if $r^\theta \leq r_m$, then by Theorem \ref{Global2}
\begin{align*}
\frac{m(B(\xi,r^\theta))}{m(B(\xi,r))}  \lesssim  \left(\frac{r^\theta}{r}\right)^h \left(\frac{\rjj}{r^\theta}\right)^{h-1}
\leq \left(\frac{r^\theta}{r}\right)^h \left(\frac{r^\theta}{\rj}\right)^{(h-1) \pmax} &\lesssim \left(\frac{r^\theta}{r}\right)^h r^{\theta(h-1)\pmax} \\
&=\left(\frac{r^\theta}{r}\right)^{h+\frac{\theta\pmax}{1-\theta}(1-h)}.
\end{align*}
In either case, 
\begin{equation*}
\frac{m(B(\xi,r^\theta))}{m(B(\xi,r))} \lesssim  \left(\frac{r^\theta}{r}\right)^{h+\frac{\theta\pmax}{1-\theta}(1-h)},
\end{equation*}
which proves \[\asospec m \leq h+\frac{\theta\pmax}{1-\theta}(1-h)\] as required.

 \subsubsection{When $h \geq 1$}\label{AsospecM2}   We show $\asospec m = h+(h-1)\pmax .$ This follows easily, since 
\[
h+(h-1)\pmax = \ubox m \leq \asospec m \leq \aso m \leq h+(h-1)\pmax.
\]

\subsection{Lower spectrum of $m$: proof of Theorem \ref{juliam} part \ref{thmm5}}\label{LowspecM}
\subsubsection{When $h \leq 1$}\label{LowspecM1}
 We show $\lowspec m = h+(h-1)\pmax.$ Note that 
\begin{equation*}
\lowspec m \geq \low m \geq h+(h-1)\pmax
\end{equation*}
and so we need only prove the upper bound. To do this, let $\xi =\omega \in \Omega \subseteq  J_p(T)$  with $\petal = \pmax$, noting that  $T^{k}(\omega) = \omega$ for all $k \geq 1$.   Then, using Theorem \ref{Global2},  for all sufficiently small $r>0$
\begin{align*}
\frac{m(B(\omega,r^\theta))}{m(B(\omega,r))} \lesssim \left(\frac{r^\theta}{r}\right)^h \left(\frac{r^\theta}{r}\right)^{(h-1)\petal} 
=   \left(\frac{r^\theta}{r}\right)^{h+(h-1)\pmax}
\end{align*}
which proves $\lowspec m \leq h+(h-1)\pmax$, as required.

\subsubsection{When $h > 1$}\label{LowspecM2} 

The upper bound here  follows from the upper bound for the lower spectrum of $J(T)$, see Section \ref{LowspecJ2}.  Therefore it remains to prove the lower bound, that is, we show \[\lowspec m \geq h+\min\left\{1,\frac{\theta\pmax}{1-\theta}\right\}(1-h) .\]
The case when $\theta \geq {1}/{(1+\pmax)}$ follows easily, as  $\lowspec m \geq \low m \geq 1,$ so we assume $\theta < {1}/{(1+\pmax)}$.
Let $\xi \in J_r(T)$, and $r>0$ be small and assume that $\rjj \leq r^\theta < \rj$, $\rll \leq r < \rl$, with $l \geq j$, $T^{k}(\xi) \in U_{\omega_1}$ for $n_j(\xi) < k < n_{j+1}(\xi)$ and $T^{k}(\xi) \in U_{\omega_2}$ for $n_l(\xi) < k < n_{l+1}(\xi)$ for some $\omega_1,\omega_2 \in \mathbf{\Omega}$.   The case $\xi \in J_p(T)$ is similar and omitted. Note that $m(B(\xi,R)) \lesssim r^h$ and we may assume $\rj \lesssim 1$.  This latter fact can be justifed by \eqref{zoombounded} and the subsequent discussion in Section \ref{AsospecM1}.

Let $r_m = \rj\left({\rjj}/{\rj}\right)^{\frac{1}{1+p(\omega_1)}}$. If $r^\theta > r_m$, then by Theorem \ref{Global2}
\begin{align*}
\frac{m(B(\xi,r^\theta))}{m(B(\xi,r))} \gtrsim \left(\frac{r^\theta}{r}\right)^h \left(\frac{r^\theta}{\rj}\right)^{(h-1)p(\omega_1)} 
\geq  \left(\frac{r^\theta}{r}\right)^h \left(\frac{r^\theta}{\rj}\right)^{(h-1) \pmax} &\gtrsim \left(\frac{r^\theta}{r}\right)^h r^{\theta(h-1)\pmax} \\
&=\left(\frac{r^\theta}{r}\right)^{h+\frac{\theta\pmax}{1-\theta}(1-h)},
\end{align*}
and if $r^\theta \leq r_m$, Theorem \ref{Global2} gives
\begin{align*}
\frac{m(B(\xi,r^\theta))}{m(B(\xi,r))}  \gtrsim  \left(\frac{r^\theta}{r}\right)^h \left(\frac{\rjj}{r^\theta}\right)^{h-1} 
\geq  \left(\frac{r^\theta}{r}\right)^h \left(\frac{r^\theta}{\rj}\right)^{(h-1) \pmax} &\gtrsim \left(\frac{r^\theta}{r}\right)^h r^{\theta(h-1)\pmax} \\
&=\left(\frac{r^\theta}{r}\right)^{h+\frac{\theta\pmax}{1-\theta}(1-h)}.
\end{align*}
In either case, 
\begin{equation*}
\frac{m(B(\xi,r^\theta))}{m(B(\xi,r))} \gtrsim  \left(\frac{r^\theta}{r}\right)^{h+\frac{\theta\pmax}{1-\theta}(1-h)},
\end{equation*}
which proves \[\lowspec m \geq {h+\frac{\theta\pmax}{1-\theta}(1-h)}\] as required.

\section{Parabolic Julia sets: proof of Theorem \ref{julias}} \label{juliasproof}

\subsection{Preliminary estimates}\label{JuliaPre}
The following lemma will be used to  count canonical balls of certain sizes. 
\begin{lem}\label{CountCanon}
Let  $\omega \in \mathbf{\Omega}$ and $I := \bigcup\limits_{n \geq 0} T^{-n}(I(\omega))$.  Let 
$\xi \in J(T)$ and $R>r>0$. For $R$ sufficiently small, 
\begin{equation*}
\sum_{\substack{c(\omega) \in I \cap B(\xi,R) \\ R > r_{c(\omega)} \geq r} } r_{c(\omega)}^h \lesssim \text{\normalfont{log}}(R/r) \ m(B(\xi,R)).
\end{equation*}
 \end{lem}
\begin{proof}
By \cite[Theorem 3.1]{SU2}, there exists a constant $\kappa > 0$ dependent only on $T$ such that   for sufficiently small $r>0$, 
\begin{equation*}
J(T) \subseteq \bigcup_{\substack{c(\omega) \in I  \\  r_{c(\omega)} \geq r} } B(c(\omega),\kappa r_{c(\omega)}^{\petal/(1+\petal)} r^{1/(1+\petal)})
\end{equation*}
with multiplicity $\lesssim 1$.  The fact that this is a cover is \cite[Theorem 3.1 (ii)]{SU2} and the fact that the multiplicity is bounded follows from \cite[Theorem 3.1 (i)]{SU2} which gives that the same collection of balls may be scaled by a uniform constant to provide a \emph{packing}.  In particular, for $R>r>0$ with $R$ sufficiently small, the set
\[\bigcup_{\substack{c(\omega) \in I \cap B(\xi,R) \\ R > r_{c(\omega)} \geq r} } B(c(\omega),\kappa r_{c(\omega)}^{\petal/(1+\petal)} r^{1/(1+\petal)})\]
has multiplicity $\lesssim 1$ and is contained in the ball $B(\xi,(\kappa+1)R)$. Therefore, 
\begin{align}
m(B(\xi,(\kappa+1)R)) \nonumber 
&\gtrsim  \sum_{\substack{c(\omega) \in I \cap B(\xi,R) \\ R > r_{c(\omega)} \geq r} } m\left(B(c(\omega),\kappa r_{c(\omega)}^{\petal/(1+\petal)} r^{1/(1+\petal)})\right) \nonumber \\
&\gtrsim  \sum_{\substack{c(\omega) \in I \cap B(\xi,R) \\ R > r_{c(\omega)} \geq r} } \left( r_{c(\omega)}^{\petal/(1+\petal)} r^{1/(1+\petal)}\right)^h \left(\frac{ r_{c(\omega)}^{\petal/(1+\petal)} r^{1/(1+\petal)}}{\rl}\right)^{(h-1)\petal} \nonumber \\
&\gtrsim  r^h \sum_{\substack{c(\omega) \in I \cap B(\xi,R) \\ R > r_{c(\omega)} \geq r} } \left(\frac{r_{c(\omega)}}{r}\right)^{\petal/(1+\petal)} 
\gtrsim   r^h \sum_{\substack{c(\omega) \in I \cap B(\xi,R) \\ R > r_{c(\omega)} \geq r} } 1 \label{estimate}
\end{align}
where the second inequality is an application of Theorem \ref{Global2}, and the third uses the fact that $r_{c(\omega)} \approx r_l$. Therefore
\begin{align*}
\sum_{\substack{c(\omega) \in I \cap B(\xi,R) \\ R > r_{c(\omega)} \geq r} } r_{c(\omega)}^h  &\leq \sum_{m \in \mathbb{Z}\cap[0,\text{\normalfont{log}}(R/r)]} \sum_{\substack{c(\omega) \in I \cap B(\xi,R) \\ e^{m+1}r > r_{c(\omega)} \geq e^{m}r} } r_{c(\omega)}^h \\
 &\lesssim \sum_{m \in \mathbb{Z}\cap[0,\text{\normalfont{log}}(R/r)]} \sum_{\substack{c(\omega) \in I \cap B(\xi,R) \\ e^{m+1}r > r_{c(\omega)} \geq e^{m}r} } r^h e^{mh}\\
 &\lesssim \sum_{m \in \mathbb{Z}\cap[0,\text{\normalfont{log}}(R/r)]}  r^h e^{mh} \sum_{\substack{c(\omega) \in I \cap B(\xi,R) \\ R > r_{c(\omega)} \geq e^{m}r} } 1 \\
 &\lesssim \sum_{m \in \mathbb{Z}\cap[0,\text{\normalfont{log}}(R/r)]}  r^h e^{mh} \left((r e^m)^{-h} m(B(\xi,(\kappa+1)R))\right) \quad  \text{by (\ref{estimate})}\\
&\lesssim \text{\normalfont{log}}(R/r) \ m(B(\xi,R))
\end{align*}
where the last inequality uses the fact that $m$ is a doubling measure.
\end{proof}
We also require the following  lemma. We omit the case $h=1$ for notational convenience. One may derive an appropriate analogue in that case as well but we will not need it.
\begin{lem}\label{Key}
Suppose $h \neq 1$. There exists a constant $C\geq 1$ such that, for $\xi \in J_r(T)$ and  $R>0$   sufficiently small and such that $\phi(\xi, R)$ is defined via $\omega \in \mathbf{\Omega}$ (recall discussion following Theorem \ref{Global2}),  there exists  $c(\omega) \in \cup_k T^{-k}(\omega) \subseteq  J_p(T)$ such that
 \[B(\xi,R) \subseteq B(c(\omega), C \phi(\xi,R)^{\frac{1}{(h-1)\petal}} r_{c(\omega)}).\]
\end{lem}
\begin{proof}
From \cite[(5.6)]{DU4} (noting that the $\phi$ used there is different from the $\phi$ used here), we get 
\begin{equation*}
B(\xi,R) \subseteq T_\xi^{-k}(B(\omega, C' \phi(\xi,R)^{\frac{1}{(h-1)\petal}}))
\end{equation*}
for  a uniform constant  $C'> 0$,  where $k$ is an appropriately chosen  integer and  $T_\xi^{-k}$ is an appropriately chosen holomorphic inverse branch of $T^k$ defined on  an appropriate  neighbourhood of $\omega$. The desired result then follows by setting   $c(\omega) := T_\xi^{-k}(\omega)$ and using    the Koebe Distortion theorem, noting that the canonical radius $r_{c(\omega)}$ satisfies
\begin{equation*} \label{nproperty}
r_{c(\omega)} : = |(T^{k})'(T^{-k}_\xi(\omega))| ^{-1}      = |(T_\xi^{-k})'(\omega)| 
\end{equation*}
 by the inverse function rule.  
\end{proof}

\subsection{Assouad dimension of $J(T)$: proof of Theorem \ref{julias} part \ref{thmj1}}\label{AsoJ}

  The lower bound will follow from our lower  bound for the Assouad spectrum of $J(T)$, see Section \ref{AsospecJ}.  Therefore we only need to prove the upper bound. That is, we show \[\aso J(T) \leq \max\{1,h\}.\]
Note that when $h \leq 1$,  $\aso J(T) \leq \aso m \leq 1,$ so throughout we assume that $h > 1$.

Let $\xi \in J(T)$, $\epsilon > 0$, and $R>r>0$ with $R/r \geq \max\{e^{\epsilon^{-1}},10\}$. Let $\{B(x_i,r)\}_{i \in X}$ be a centred $r$-packing of $B(\xi,R) \cap J(T)$ of maximal cardinality. We assume for convenience that $x_i \in J_r(T)$ for each $i$, which we may do since $J_r(T)$ is dense in $J(T)$.  This is not really necessary but allows for efficient application of Lemma \ref{Key}. 

If $z \in J_r(T)$ and $\rho>0$ is sufficiently small and $\phi(z, \rho)<1$, then  we may associate to the pair $(z,\rho)$ a point $c(\omega) \in J_p(T)$ coming from Lemma \ref{Key}.  That is, $\phi(z, \rho) $  is  defined via  $\omega$ (recall the discussion following Theorem \ref{Global2}) and Lemma \ref{Key} gives us the associated $c(\omega)$.   In particular, in this case $z$ belongs to the associated  canonical ball $B(c(\omega), r_{c(\omega)})$.  

Decompose $X$ as 
\[X = X_0 \cup X_1 \cup \bigcup\limits_{n=2}^{\infty} X_n\]
where
\begin{align*}
X_0 &= \{i \in X \mid  \ \text{if $(x_i,r)$ is associated with $c(\omega)$ then}\ r_{c(\omega)} \geq 5R\} \\
X_1 &=  \{i \in X \setminus X_0 \mid \phi(x_i,r) \geq (r/R)^\epsilon\}\\
\text{and} \ \ X_n &=  \{i \in X \setminus (X_0 \cup X_1) \mid e^{-n} \leq \phi(x_i,r) < e^{-(n-1)}\}.
\end{align*}
To study those $i \in X_0$,  we decompose $X_0$ further as 
\[X_0 = X_0^0 \cup \bigcup\limits_{n=1}^{\infty} X_0^n\]
where
\begin{align*}
X_0^0 &= \{i \in X_0 \mid \phi(x_i,r) \geq \phi(\xi,R)\}\\
X_0^n &= \{i \in X_0 \mid e^{-n}\phi(\xi,R) \leq \phi(x_i,r) < e^{-(n-1)} \phi(\xi,R)\}.
\end{align*}
By Theorem \ref{Global2}
\begin{align*}
R^h \phi(\xi,R)  \gtrsim m(B(\xi,R)) 
\gtrsim m\left(\bigcup_{i \in X_0^0} B(x_i,r)\right) 
\gtrsim \min_{i \in X_0^0} (\# X_0^0)  r^h \phi(x_i,r)\
\geq (\# X_0^0)  r^h \phi(\xi,R),
\end{align*}
which implies that \[\# X_0^0 \lesssim \left(\frac{R}{r}\right)^h.\]

Turning our attention to $X_0^n$ with $n \geq 1$, write $X_0^n(c(\omega))$ to denote the set of all $i \in X_0^n$ for which the pair $(x_i,r)$ is  associated with $c(\omega)   \in J_p(T)$.  In particular, Lemma \ref{Key} ensures that 
\[
B(x_i,r) \subseteq B(c(\omega), C (\phi(\xi,R) e^{-(n-1)})^{\frac{1}{(h-1)\petal}} r_{c(\omega)})
\]
for all $i \in X_0^n(c(\omega))$, where $C \geq 1$ is the constant coming from Lemma \ref{Key}.   Temporarily fix  $c(\omega) \in J_p(T)$ such that $X_0^n(c(\omega)) \neq \emptyset$.  We show that
 \begin{equation} \label{cprime}
 \size{c(\omega) - z}   \approx   \phi(z, \rho )^{\frac{1}{(h-1)\petal}} r_{c(\omega)} 
\end{equation}
for all $z \in J_r(T)$ and $\rho >0$  such that $ (z,\rho)$ is associated with $c(\omega)$.  This  is essentially proved in \cite{DU4}.  More precisely,   \cite[inequalities  (5.3) and  (5.5)]{DU4} give 
\[ \phi(z ,\rho)^{\frac{1}{(h-1)\petal}}\approx  \vert T^{k}(z) - \omega \vert
\]
where $k \in \mathbb{N}$ is as in the proof of Lemma \ref{Key}, that is, such that $T^k(c(\omega)) = \omega$ and $|(T^{k})'(c(\omega))| =  r_{c(\omega)}^{-1}$ (we note that the $\phi$ used in \cite{DU4} is different than the notation we are using, so we have translated the equation into our notation which is consistent with \cite{SU1},\cite{SU2}).  Then, applying the Koebe Distortion Theorem, 
\[
\vert T^{k}(z) - \omega \vert  \approx   r_{c(\omega)}^{-1}  \vert z-c(\omega) \vert \]
establishing \eqref{cprime}.

Suppose $i \in X_0^N(c(\omega))$ for some large $N$, which implies $\phi(x_i,r) \leq e^{-(N-1)}\phi(\xi, R)$.   Recall that  $r_{c(\omega)} \geq 5R$.  Let $j \in \mathbb{N}$ be such that  $n_{j+1}=n_{j+1}(x_i) \geq n_{j }(x_i) =n_{j} $ and 
\[
|(T^{n_{j+1}})'(x_i)|^{-1} = r_{j+1}(x_i) \leq r < r_j(x_i)
\]
 and  $T^{k}(x_i) \in U_\omega$ for all $n_j<k<n_{j+1}$.  For $m$ chosen as $k$   in Lemma \ref{Key},  $T^m(c(\omega)) = \omega$ and $|(T^{m})'(c(\omega))| = r_{c(\omega)}^{-1}$, we have $T^m(\xi) \in U_\omega$. Note that 
\[
|(T^{n_{j+1}})'(x_i)|^{-1} = r_{j+1}(x_i)\leq  r <R\leq r_{c(\omega)}/5 \approx |(T^{m})'(c(\omega))|^{-1},
\]
and  $T^{k}(\xi) \in U_\omega$ for all $n_j <  m \leq k \leq  l \leq n_{j+1}$ for some $l$ satisfying $|(T^{l})'(\xi)|^{-1} \gtrsim R$. It follows that   $(\xi, aR)$ is also associated with  $c(\omega)$  for some $a \approx 1$.   Then by \eqref{cprime} 
\begin{align*}
|\xi -c(\omega)|   \approx    \phi(\xi,aR)^{\frac{1}{(h-1)\petal}} r_{c(\omega)} 
\approx    \phi(\xi,R)^{\frac{1}{(h-1)\petal}} r_{c(\omega)}
\end{align*}
since $a \approx 1$, and
\begin{align*}
 |x_i - c(\omega)| \approx   \phi(x_i, r )^{\frac{1}{(h-1)\petal}} r_{c(\omega)} 
 \lesssim  e^{-\frac{N}{(h-1) \petal} }\phi(\xi,R)^{\frac{1}{(h-1)\petal}} r_{c(\omega)}
\end{align*}
and therefore  (see Figure \ref{boundRfig})
\begin{align}  \label{boundingk}
R \geq |\xi - x_i| \geq  |\xi -c(\omega)| - |x_i - c(\omega)|  
 \gtrsim  \phi(\xi,R)^{\frac{1}{(h-1)\petal}} r_{c(\omega)}
\end{align}
for   $N$ chosen large enough depending only on various  implicit  constants.    We fix such $N$ in the following discussion.

\begin{figure}[H] 
\centering
\begin{tikzpicture}
\filldraw[black] (0.2,0.2) circle (2pt);
\filldraw[black] (2,1.5) circle (2pt);
\filldraw[black] (-3,-2) circle (2pt);
\draw[dashed] (2,1.5) -- (3.84,-0.34);
\node at (0.5,0.2) {$x_i$};
\node at (2.3,1.6) {$\xi$};
\node at (-3.4,-2.37) {$c(\omega)$};
\node at (2.8,0.2) {$R$};
\draw (2,1.5) circle (2.6cm);
\draw [decorate,decoration={brace,amplitude=10pt},xshift=-4pt,yshift=0pt]
(-2.87,-1.8) -- (2,1.6)node [black,midway,xshift=-58pt,yshift=21pt] {
$\approx   \phi(\xi,R)^{\frac{1}{(h-1)\petal}} r_{c(\omega)}$};
\draw [decorate,decoration={brace,mirror,amplitude=10pt},xshift=-4pt,yshift=0pt]
(-2.75,-2.15) -- (0.36,0.05)node [black,midway,xshift=80pt,yshift=-20pt] {
$ \lesssim   e^{-\frac{N}{(h-1) \petal} }\phi(\xi,R)^{\frac{1}{(h-1)\petal}} r_{c(\omega)}$};
\end{tikzpicture}
\caption{Bounding $R$ from below.}

\label{boundRfig}

\end{figure}
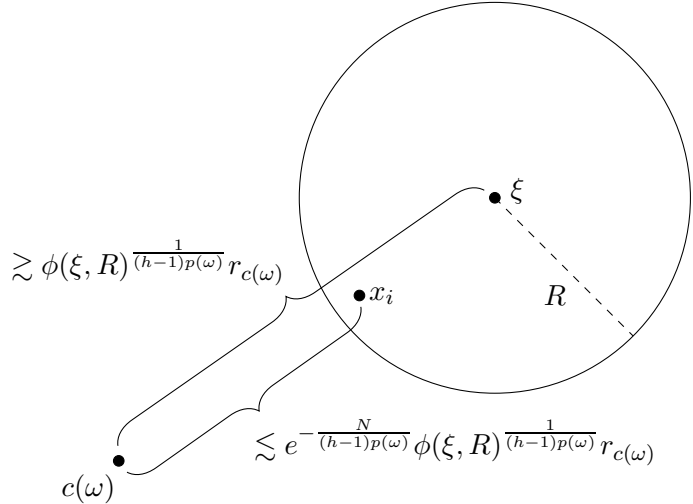

We may assume $X_0^n(c(\omega)) \neq \emptyset$ for some $n \geq N$, since otherwise $\phi(x_i, r) \gtrsim \phi(\xi, R)$ for all $i \in X_0$ and the argument bounding $\# X_0^0$ also applies to bound $\# X_0$.  By Theorem \ref{Global2} and Lemma \ref{Key}
\begin{align*}
m\left( \bigcup\limits_{i \in X_0^n(c(\omega))} B(x_i,r)  \right) & \lesssim m \left( B(c(\omega), C (\phi(\xi,R) e^{-(n-1)})^{\frac{1}{(h-1)\petal}} r_{c(\omega)}) \right) \\
& \approx (e^{-n} \phi(\xi,R))^{\frac{h}{(h-1)\petal}}  r_{c(\omega)}^h \phi\left(c(\omega), C (\phi(\xi,R) e^{-(n-1)})^{\frac{1}{(h-1)\petal}} r_{c(\omega)})  \right)\\
& \approx (e^{-n} \phi(\xi,R))^{\frac{h}{(h-1)\petal}}  r_{c(\omega)}^h \left(\frac{ C (\phi(\xi,R) e^{-(n-1)})^{\frac{1}{(h-1)\petal}} r_{c(\omega)}}{r_{c(\omega)}}  \right)^{(h-1)p(\omega)}\\
& \approx (e^{-n} \phi(\xi,R))^{\frac{h}{(h-1)\petal}}  r_{c(\omega)}^h e^{-n} \phi(\xi,R)
\end{align*}
where $C$ is the constant from Lemma \ref{Key}. In the other direction, as $\{x_i\}_{i \in X_0^n(c(\omega))}$ is an $r$-packing,
\begin{align*}
m\left( \bigcup\limits_{i \in X_0^n(c(\omega))}B(x_i,r)  \right)   \geq \sum_{i \in X_0^n(c(\omega))}  m\left(B(x_i,r)\right)  \gtrsim \left(\# X_0^n(c(\omega))\right)  r^h e^{-n} \phi(\xi,R)
\end{align*}
and so
\begin{align*}
\# X_0^n(c(\omega))  \lesssim  (e^{-n} \phi(\xi,R))^{\frac{h}{(h-1)\petal}}  r_{c(\omega)}^h r^{-h} 
\lesssim e^{\frac{-nh}{(h-1)\pmax}} \left(\frac{R}{r} \right)^h
\end{align*}
by \eqref{boundingk}.  The  number of distinct $c(\omega)$ giving rise to non-empty $X_0^n(c(\omega))$ with $n \geq N$  is $\lesssim 1$ since  $(\xi, aR)$ is associated with  each such $c(\omega)$ for some $a \approx 1$. Therefore
\begin{equation*}
\# X_0^n  \lesssim e^{\frac{-nh}{(h-1)\pmax}} \left(\frac{R}{r} \right)^h.
\end{equation*}
Pulling these estimates together, we get
\begin{align}\label{estX0}
\# X_0   = \# X_0^0 + \sum\limits_{n=1}^{\infty} \#X_0^n 
\lesssim \left(\frac{R}{r}\right)^h +  \sum\limits_{n=1}^{\infty} e^{\frac{-nh}{(h-1)\pmax}} \left(\frac{R}{r} \right)^h 
 \lesssim  \left(\frac{R}{r}\right)^h.
\end{align}
If $i \in X_1$, then
\begin{align*}
R^h \geq R^h \phi(\xi,R) \gtrsim m(B(\xi,R)) 
\gtrsim \min_{i \in X_1} (\# X_1)  r^h \phi(x_i,r) 
\gtrsim (\# X_1)  r^h \left(\frac{r}{R}\right)^\epsilon
\end{align*}
which proves
\begin{equation}\label{estX1}
\# X_1 \lesssim \left(\frac{R}{r}\right)^{h+\epsilon}.
\end{equation}
Finally, we turn our attention to $X_n$. If $i \in X_n$ for $n \geq 2$, then $\phi(x_i,r) < e^{-(n-1)}$, and therefore by Lemma \ref{Key} the ball $B(x_i,r)$ is contained in the squeezed canonical ball 
\[B(c(\omega),C e^{\frac{-(n-1)}{(h-1)\petal}} r_{c(\omega)})\]
for some $c(\omega) \in J_p(T)$ with $r_{c(\omega)} < 5R$. Therefore, $ r/C \leq r_{c(\omega)} < 5R < 6CR$ and, noting that $h > 1$, 
\begin{align*}
|c(\omega) - \xi| \leq |c(\omega)-x_i| + |x_i - \xi| 
&\leq C r_{c(\omega)} +R
 < 5CR+R 
\leq 6CR
\end{align*}
and so $c(\omega) \in B(\xi,6CR)$.  For $p \in \{1,\dots,\pmax\}$, let
\[X_n^p = \{i \in X_n \mid \text{$(x_i,r)$ is associated with $c(\omega)$ and $p(\omega) = p$}\}\] and let \[I_p = \bigcup_{\substack{\omega \in \mathbf{\Omega} \\ \petal = p}} I\]
where $I$ is defined in the same way as in Lemma \ref{CountCanon}. 
Then
\begin{align*}
m\left(\bigcup\limits_{i \in X_n^p} B(x_i,r)\right) &\leq m\left (\bigcup_{\substack{c(\omega) \in I_p \cap B(\xi,6CR) \\ 6CR > r_{c(\omega)} \geq r/C} } B\left(c(\omega),C e^{\frac{-(n-1)}{(h-1)p}} r_{c(\omega)}\right) \right)\\
&\leq  \sum_{\substack{c(\omega) \in I_p \cap B(\xi,6CR) \\ 6CR > r_{c(\omega)} \geq r/C} }  m\left( B\left(c(\omega), C e^{\frac{-(n-1)}{(h-1)p}} r_{c(\omega)}\right) \right)\\
&\lesssim  \sum_{\substack{c(\omega) \in I_p \cap B(\xi,6CR) \\ 6CR > r_{c(\omega)} \geq r/C} } e^{\frac{-nh}{(h-1)p}} r_{c(\omega)}^h \phi(c(\omega), C e^{\frac{-(n-1)}{(h-1)p}} r_{c(\omega)}) \\
&\lesssim \sum_{\substack{c(\omega) \in I_p \cap B(\xi,6CR) \\ 6CR > r_{c(\omega)} \geq r/C} } e^{\frac{-nh}{(h-1)p}} r_{c(\omega)}^h e^{-(n-1)} \qquad  \text{by Theorem \ref{Global2} (ii)}\\
&\lesssim e^{-n}  e^{\frac{-nh}{(h-1)p}} \left(\text{log}(R/r) + \text{log}(6C^2)\right) m(B(\xi,R)) \ \text{by Lemma \ref{CountCanon}}\\
&\lesssim e^{-n}  e^{\frac{-nh}{(h-1)p}} \text{log}(R/r)  R^h \phi(\xi,R) \\
&\lesssim e^{-n}  e^{\frac{-nh}{(h-1)p}} \epsilon^{-1} n R^h.
\end{align*}
The final line uses the estimate $(r/R)^\epsilon > e^{-n}$ which holds whenever  $X_n$ is non-empty.  On the other hand,  by Theorem \ref{Global2}
\begin{align*}
m\left(\bigcup\limits_{i \in X_n^p} B(x_i,r) \right) \geq \sum\limits_{i \in X_n^p} m(B(x_i,r)) 
\gtrsim ( \# X_n^p) r^h  e^{-n}.
\end{align*}
Therefore
\begin{align*}
\# X_n^p \lesssim \epsilon^{-1}n e^{\frac{-nh}{(h-1)p}} \left(\frac{R}{r}\right)^h 
\lesssim \epsilon^{-1}n  e^{\frac{-nh}{(h-1)\pmax}} \left(\frac{R}{r}\right)^h
\end{align*}
which gives
\begin{align}\label{estXn}
\# X_n \leq \sum\limits_{p=1}^{\pmax} \# X_n^p 
\lesssim \epsilon^{-1}n e^{\frac{-nh}{(h-1)\pmax}} \left(\frac{R}{r}\right)^h.
\end{align}
Combining (\ref{estX0}),(\ref{estX1}) and (\ref{estXn}), 
\begin{align*}
\# X   = \# X_0 + \# X_1 + \sum\limits_{n=2}^{\infty} \# X_n  
&\lesssim  \left(\frac{R}{r}\right)^h +  \left(\frac{R}{r}\right)^{h+\epsilon} +  \sum\limits_{n=2}^{\infty} \epsilon^{-1}n e^{\frac{-nh}{(h-1)\pmax}} \left(\frac{R}{r}\right)^h \\
&\lesssim  \left(\frac{R}{r}\right)^{h+\epsilon} + \epsilon^{-1}  \left(\frac{R}{r}\right)^h
\end{align*}
which proves that $\aso J(T) \leq h+\epsilon$, and letting $\epsilon \rightarrow 0$ proves that $\aso J(T) \leq h$, as required.

\subsection{Lower dimension of $J(T)$: proof of Theorem \ref{julias} part \ref{thmj2}}\label{LowJ}

The upper bound will follow from our upper  bound for the lower spectrum of $J(T)$, see Section \ref{LowspecJ}.  Therefore we only need to prove the lower bound. That is, we show \[\low J(T) \geq \min\{1,h\}.\]
Note that when $h \geq 1$,  $\low J(T) \geq \low m \geq 1$ so we may assume throughout that $h < 1$.

Let $\xi \in J(T)$,  and $1>R>r>0$ with $R/r \geq 10$. Let $\{B(y_i,r)\}_{i \in Y}$ be a centred $r$-covering of $B(\xi,R) \cap J(T)$ of minimal cardinality. We assume for convenience that each $y_i \in J_r(T)$, which we may do since $J_r(T)$ is dense in $J(T)$.

As is standard by now, if  $\phi(y_i, r)>1$, then  we may associate to the pair $(y_i,r)$ a point $c(\omega) \in J_p(T)$ coming from Lemma \ref{Key}.  That is, $\phi(y_i, r) $  is  defined via  $\omega$ (recall the discussion following Theorem \ref{Global2}) and Lemma \ref{Key} gives us the associated $c(\omega)$.   In particular, in this case $y_i$ belongs to the associated  canonical ball $B(c(\omega), r_{c(\omega)})$.

Decompose $Y$ as $Y = Y_0 \cup Y_1$ where
\begin{align*}
Y_0 &= \{i \in Y \mid \text{if $(y_i,r)$ is associated with $c(\omega)$ then  $r_{c(\omega)} \geq 5R$}\}\\
Y_1 &= Y \setminus Y_0.
\end{align*}
As $\{B(y_i,r)\}_{i \in Y}$ is a covering of $B(\xi,R) \cap J(T)$, 
\begin{align}\label{measure}
m(B(\xi,R))  \leq m\left(\cup_{i \in Y} B(y_i,r)\right) 
=  m\left(\cup_{i \in Y_0} B(y_i,r)\right) +  m\left(\cup_{i \in Y_1} B(y_i,r)\right) 
\end{align}
and therefore one of the terms in (\ref{measure}) must be at least $(m(\xi,R))/2$.

Suppose that the term involving $Y_0$ is at least $(m(\xi,R))/2$. Then we write
\[Y_0^0 = \{i \in Y_0 \mid \phi(y_i,r) \leq K \phi(\xi,R) \} \]
where $K>0$ is a constant chosen according to the following lemma.

\begin{lem}
We may choose $K>0$ independently of $R$ and $r$ sufficiently large  such that
\begin{equation} \label{choosingk}
\frac{m\left(\cup_{i \in Y_0 \setminus Y_0^0} B(y_i,r) \right)}{m(B(\xi,R))} \leq \frac{1}{100}. 
\end{equation}
\end{lem}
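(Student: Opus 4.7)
The strategy mirrors the Assouad-dimension upper bound in Section~\ref{AsoJ}, with large $\phi$-values playing the role that small ones played there (since $h<1$ forces $\phi \geq 1$, and the ``heavy'' $y_i$ are those with $\phi(y_i,r) \gg \phi(\xi,R)$). Dyadically decompose
\[
Y_0 \setminus Y_0^0 \;=\; \bigsqcup_{n\geq 1} \mathcal{Y}_n, \qquad \mathcal{Y}_n := \{i \in Y_0 : e^{n-1}K\phi(\xi,R) \leq \phi(y_i,r) < e^n K\phi(\xi,R)\}.
\]
For each $i \in \mathcal{Y}_n$, Lemma~\ref{Key} gives $B(y_i,r) \subseteq B(c(\omega_i), C\phi(y_i,r)^{1/((h-1)p(\omega_i))} r_{c(\omega_i)})$; since $h<1$ the exponent equals $-1/((1-h)p)$, so the squeezed canonical radius is at most $C(e^{n-1}K\phi(\xi,R))^{1/((h-1)p(\omega_i))} r_{c(\omega_i)}$, which tends to $0$ as $K$ and $n$ grow.

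Imitating the reasoning leading up to \eqref{boundingk}, for $K$ large enough each $y_i \in \mathcal{Y}_n$, and therefore $\xi$, is forced to lie well inside the canonical ball around $c(\omega_i)$, making $\phi(\xi, aR)$ defined via $c(\omega_i)$ for some $a \approx 1$. The set $\mathcal{C}_n$ of canonical points appearing in $\mathcal{Y}_n$ therefore satisfies $|\mathcal{C}_n| \lesssim 1$ uniformly in $n$, and the two-sided distance formula \eqref{cprime} (applied to $\xi$ at scale $aR$) together with doubling of $m$ yields the scale-matching
\[
\frac{r_{c(\omega_i)}}{R} \;\approx\; \phi(\xi,R)^{1/((1-h)p(\omega_i))}.
\]

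Theorem~\ref{Global2}(ii) then bounds the measure of each squeezed canonical ball by $\lesssim (e^{n-1}K\phi(\xi,R))^{\gamma_p} r_{c(\omega_i)}^h$, where $\gamma_p := 1 + h/((h-1)p) = 1 - h/((1-h)p)$ and the Aaronson--Denker--Urba\'nski bound $h > \pmax/(1+\pmax)$ from \cite{ADU} ensures $\gamma_p < 0$ for every admissible $p$. Summing the geometric series in $n$, summing over the $\lesssim 1$ canonical points in each $\mathcal{C}_n$, and using the scale-matching to cancel $\phi(\xi,R)^{\gamma_p - 1}(r_{c(\omega_i)}/R)^h \approx 1$, we arrive at
\[
\frac{m\bigl(\cup_{i \in Y_0\setminus Y_0^0} B(y_i,r)\bigr)}{m(B(\xi,R))} \;\lesssim\; K^{\gamma_p},
\]
with implicit constant depending only on $h$ and $\pmax$. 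Since $\gamma_p < 0$, choosing $K$ large enough forces the right-hand side below $\tfrac{1}{100}$; the case $\xi \in J_p(T)$ is handled analogously via the terminating hyperbolic zoom. The principal obstacle is the uniqueness statement $|\mathcal{C}_n|\lesssim 1$, which is a direct adaptation of the Koebe-distortion/hyperbolic-zoom bookkeeping performed in the paragraph preceding \eqref{boundingk} and is the only genuinely technical step; once that is in hand, the cancellation produced by the scale-matching makes the remaining estimates mechanical.
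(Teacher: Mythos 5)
Your proposal is correct and follows essentially the same route as the paper's proof: squeeze each $B(y_i,r)$ into a canonical ball via Lemma \ref{Key}, show that only $\lesssim 1$ canonical points $c(\omega)$ can occur (by the argument preceding \eqref{boundingk}), match scales via doubling to obtain $r_{c(\omega)}\phi(\xi,R)^{1/((h-1)\petal)} \approx R$, and conclude because $1+h/((h-1)\petal)<0$ thanks to $h>\pmax/(1+\pmax)$. The only cosmetic difference is your dyadic decomposition in $n$, which the paper bypasses by observing that, since $h<1$, every squeezed radius for $i \in Y_0\setminus Y_0^0$ is already dominated by the single radius $C(K\phi(\xi,R))^{1/((h-1)\petal)}r_{c(\omega)}$, so one ball per canonical point suffices and no geometric series is needed.
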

\begin{proof}
Write $Y(c(\omega))$ to denote the set of all $i \in Y_0 \setminus Y_0^0$ such that $(y_i,r)$ is associated with $c(\omega)$.  Then
\begin{align} \label{yballin}
B(y_i,r) \subseteq B(c(\omega), C \phi(y_i,r)^{\frac{1}{(h-1)\petal}} r_{c(\omega)})  
\subseteq B(c(\omega), C   (K\phi(\xi,R))^{\frac{1}{(h-1)\petal}} r_{c(\omega)})
\end{align}
using the definition of $Y_0 \setminus Y_0^0$ and where $C \geq 1$ is the constant from Lemma \ref{Key}.  Consider non-empty $Y(c(\omega))$.   Since $r_{c(\omega)} \geq 5R$ we may follow the proof of   \eqref{boundingk} to show that $(\xi,aR)$ is also associated with $c(\omega)$  for some $a \approx 1$ and that $K$ can be chosen large enough such that  $|c(\omega) - \xi| \lesssim R$.  Then, since $m$ is doubling, by Theorem \ref{Global2} 
\begin{align*}
R^h \phi(\xi, R) \approx m(B(\xi,R)) 
\approx m(B(c(\omega), R)) 
\approx R^h \phi(c(\omega),R) 
\approx R^h \left(\frac{R}{r_{c(\omega)}} \right)^{(h-1)p(\omega)}
\end{align*}
and so
\begin{equation} \label{boxofrain}
r_{c(\omega)} \phi(\xi,R)^{\frac{1}{(h-1)\petal}} \approx R.
\end{equation}
Applying Theorem \ref{Global2} (ii) and \eqref{yballin},
\begin{align*}
 \frac{m\left(\cup_{i \in Y(c(\omega))} B(y_i,r) \right)}{m(B(\xi,R))} &\lesssim \frac{ \left( (K\phi(\xi,R))^{\frac{1}{(h-1)\petal}} \right)^{h+(h-1)\petal} r_{c(\omega)}^h }{\phi(\xi,R) R^h} \\
&=   K^{1+\frac{h}{(h-1)\petal}} \left(\frac{\phi(\xi,R)^{\frac{1}{(h-1)\petal}} r_{c(\omega)}}{R} \right)^h 
\lesssim   K^{1+\frac{h}{(h-1)\petal}}  
\end{align*}
by \eqref{boxofrain}.  Note that    the  number of distinct squeezed canonical balls giving rise to non-empty $Y(c(\omega))$ is $\lesssim 1$.  This is because $\phi(\xi,aR)$ is associated with each such  $c(\omega)$ for some $a \approx 1$.  Using the general bound $h > \pmax/(1+\pmax)$, we see $1+h/((h-1)\petal) < 0$, and therefore we may  choose $K$ large enough to ensure  \eqref{choosingk}.  
\end{proof}

Applying \eqref{choosingk}, 
\begin{align*}m\left( \cup_{i \in Y_0^0} B(y_i,r) \right) \approx m\left( \cup_{i \in Y_0} B(y_i,r) \right) 
\geq m(B(\xi,R))/2 \end{align*}
which gives
\begin{align*}
R^h \phi(\xi,R) \lesssim m(B(\xi,R)) \lesssim m\left(\cup_{i \in Y_0^0} B(y_i,r) \right) 
\lesssim (\# Y_0^0) r^h \phi(\xi,R)
\end{align*}
where the last inequality uses the definition of $Y_0^0$. Therefore
\begin{equation}\label{estY0}
\# Y_0 \geq \# Y_0^0 \gtrsim \left(\frac{R}{r} \right)^h.
\end{equation}
Now, suppose that the second term of (\ref{measure}) is at least $m(B(\xi,R))/2$. Let $\epsilon>0$ and  write
\[Y_1^0 = \left\{ i \in Y_1 \mid \phi(y_i,r) \leq \left(\frac{R}{r} \right)^\epsilon \right\}.\]
If $i \in Y_1 \setminus Y_1^0$, then this implies that $\phi(y_i,r) >  \left({R}/{r} \right)^\epsilon$, and therefore by Lemma \ref{Key} the ball $B(y_i,r)$ is contained in the squeezed canonical ball 
\[B\left(c(\omega),  C \left(\frac{R}{r} \right)^{\frac{\epsilon}{(h-1)\petal}} r_{c(\omega)}\right)\]
for some $c(\omega) \in J_p(T)$.   Therefore, recalling the definition of $Y_1$, $ r/C \leq r_{c(\omega)} < 5R < 6CR$ and, using  $h \leq 1$, 
\[
|c(\omega) - \xi| \leq |c(\omega)-y_i| + |y_i - \xi|
 \leq C r_{c(\omega)} +R 
< 5CR+R 
\leq 6CR
\]
and so $c(\omega) \in B(\xi,6CR)$.  Therefore 
\begin{align*}
m\left(\cup_{i \in Y_1 \setminus Y_1^0} B(y_i,r) \right) 
&\lesssim \sum_{\substack{c(\omega) \in J_p(T) \cap B(\xi,6CR) \\ 6CR > r_{c(\omega)} \geq r/C} } m\left(B\left(c(\omega),  C \left(\frac{R}{r} \right)^{\frac{\epsilon}{(h-1)\petal}} r_{c(\omega)}\right) \right) \\
&\lesssim  \sum_{\substack{c(\omega) \in J_p(T) \cap B(\xi,6CR) \\ 6CR > r_{c(\omega)} \geq r/C} } \left(\frac{R}{r} \right)^{\frac{\epsilon h}{(h-1)\petal}} r_{c(\omega)}^h
\phi\left(c(\omega), \left(\frac{R}{r} \right)^{\frac{\epsilon}{(h-1)\petal}} r_{c(\omega)}\right) \\
&\lesssim  \sum_{\substack{c(\omega) \in J_p(T) \cap B(\xi,6CR) \\ 6CR> r_{c(\omega)} \geq r/C} } \left(\frac{R}{r} \right)^{\frac{\epsilon h}{(h-1)\petal}} r_{c(\omega)}^h
 \left(\frac{R}{r} \right)^{\epsilon} \qquad \text{(since $c(\omega) \in J_p(T)$)}\\
&\lesssim \left(\text{log}(R/r) + \text{log}(6C^2)\right)  \left(\frac{R}{r} \right)^{\frac{\epsilon (h+(h-1)\pmax)}{(h-1)\pmax}} m(\xi,R)
\end{align*}
where the last inequality uses Lemma \ref{CountCanon} and the fact that $m$ is doubling. Note that the exponent of the term involving $R/r$ is negative, recalling that $h > {\pmax}/{(1+\pmax)}$, so for sufficiently large $R/r$, balls with centres in $Y_1 \setminus Y_1^0$ cannot carry a fixed proportion of $m(B(\xi,R))$, and so
\begin{align*}m\left( \cup_{i \in Y_1^0} B(y_i,r) \right) \approx m\left( \cup_{i \in Y_1} B(y_i,r) \right) 
\geq m(B(\xi,R))/2. 
\end{align*}
Therefore, 
\begin{align*}
R^h \lesssim R^h \phi(\xi,R) \approx m(B(\xi,R)) 
\lesssim m\left( \cup_{i \in Y_1^0} B(y_i,r) \right) 
\lesssim ( \# Y_1^0)  r^h \left(\frac{R}{r} \right)^\epsilon
\end{align*}
where the last inequality uses the definition of $Y_1^0$. Therefore
\begin{align}\label{estY1}
\# Y_1  \geq \# Y_1^0 
\gtrsim \left(\frac{R}{r} \right)^{h-\epsilon} .
\end{align}
We have proven that at least one of (\ref{estY0}) and (\ref{estY1}) must hold, and therefore
\[ \# Y = \# Y_0 + \# Y_1 \gtrsim \left(\frac{R}{r} \right)^{h-\epsilon} \]
which proves that $\low J(T) \geq h-\epsilon$, and letting $\epsilon \rightarrow 0$ proves the desired result.

\subsection{Assouad spectrum of $J(T)$: proof of Theorem \ref{julias} part \ref{thmj3}}\label{AsospecJ}

\subsubsection{When $h \leq 1$}\label{AsospecJ1}
 We show \[\asospec J(T) = h+\min\left\{1,\frac{\theta\pmax}{1-\theta}\right\}(1-h).\]
The upper bound follows from 
\[\asospec J(T) \leq \asospec m \leq h+\min\left\{1,\frac{\theta\pmax}{1-\theta}\right\}(1-h),\]
and for the lower bound, we can apply the following proposition (see \cite[Theorem 3.4.8]{Fr2}). 
\begin{prop}\label{rho}
Let $F \subset \mathr^n$ be bounded and suppose that \[\rho =\inf \normalfont{\{\theta \in (0,1) \mid \asospec F = \aso F\}}\] exists and $\rho \in (0,1)$ and $ \normalfont{\low F = \ubox F}$. Then for $\theta \in (0,\rho)$,
\begin{equation*}
 \normalfont{\asospec F \geq \ubox F + \frac{(1-\rho)\theta}{(1-\theta)\rho}(\aso F-\ubox F)}.
\end{equation*}
\end{prop}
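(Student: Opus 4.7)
The plan is to transfer a near-extremal witness for $\asospec^{\theta'} F$ at some $\theta' > \rho$ down to the parameter $\theta$, using the hypothesis $\low F = \ubox F$ to refine the resulting packing at a smaller inner scale. Write $s = \aso F$ and $d = \ubox F = \low F$. The key identity will be that $r'^{\theta} = r^{\theta'}$ when $r' := r^{\theta'/\theta}$, so the outer ball of the $\asospec^{\theta}$ setup at scale $r'$ coincides with that of the $\asospec^{\theta'}$ setup at scale $r$; this is what will force the exponent bookkeeping to collapse into the target expression.

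First I fix $\theta \in (0,\rho)$, $\epsilon > 0$ and $\theta' \in (\rho,1)$. Since $\theta' > \rho$, the definition of $\rho$ forces $\asospec^{\theta'} F = s$, so I may pick arbitrarily small $r > 0$ and centres $\xi \in F$ with
\[
N_r(B(\xi, r^{\theta'}) \cap F) \gtrsim (r^{\theta'-1})^{s-\epsilon}.
\]
Setting $r' = r^{\theta'/\theta} < r$ gives $B(\xi, r'^{\theta}) = B(\xi, r^{\theta'})$. I then refine each of the $r$-balls appearing in the witness packing via the uniform lower bound coming from $\low F = d$: each one contains $\gtrsim (r/r')^{d-\epsilon}$ disjoint $r'$-balls centred in $F$. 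Multiplying the two estimates,
\[
N_{r'}(B(\xi, r'^{\theta}) \cap F) \gtrsim (r^{\theta'-1})^{s-\epsilon}(r/r')^{d-\epsilon}.
\]

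Rewriting the right-hand side in terms of $r'$ via $r = r'^{\theta/\theta'}$ and reading off the exponent yields
\[
\asospec^{\theta} F \geq \frac{\theta(1-\theta')(s-\epsilon) + (\theta'-\theta)(d-\epsilon)}{\theta'(1-\theta)}.
\]
Sending $\theta' \to \rho^{+}$ and $\epsilon \to 0$ and performing a short algebraic simplification recovers $d + \frac{(1-\rho)\theta}{(1-\theta)\rho}(s-d)$, which is the claimed bound.

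The main obstacle will be the scale alignment at the outset: once $r' = r^{\theta'/\theta}$ is chosen so that $r'^{\theta} = r^{\theta'}$ the rest is elementary, but any other choice loses the interpolated term in $d$. A minor technical point will be preserving pairwise disjointness of the $r'$-balls produced inside different witness $r$-balls, which is handled in the standard way by replacing the witness by a slightly more separated packing (inflating the separation by a fixed constant) before refining. Without the hypothesis $\low F = \ubox F$, the uniform refinement step is not available and one recovers only the weaker bound $\asospec^{\theta} F \geq (1-\rho)s/(1-\theta)$, which makes no use of $d$ and fails to interpolate correctly at $\theta \to 0$.
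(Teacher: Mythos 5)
The paper does not actually prove this proposition --- it is quoted verbatim from \cite[Theorem 3.4.8]{Fr2} --- so there is no in-paper argument to compare against; your proof is correct and is essentially the standard proof of that result. The key steps all check out: choosing $\theta'>\rho$ with ${\text{dim}}^{\theta'}_{\text{A}}F=\aso F$ (which is legitimate either by taking $\theta'$ in the set whose infimum defines $\rho$, or by the known fact that this set is an interval of the form $[\rho,1)$), aligning scales via $r'=r^{\theta'/\theta}$ so that $(r')^{\theta}=r^{\theta'}$, refining each witness $r$-ball into $\gtrsim(r/r')^{\low F-\epsilon}$ disjoint $r'$-balls using $\low F=\ubox F$, and the exponent arithmetic, which does simplify to the stated bound as $\theta'\to\rho^{+}$ and $\epsilon\to 0$. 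The two technicalities you flag (separating the witness packing so that the refined $r'$-balls remain disjoint, and the fact that the combined packing sits in a ball of radius roughly $2(r')^{\theta}$ rather than $(r')^{\theta}$) are genuinely harmless, since covering numbers at comparable scales and constant inflations of the outer radius do not affect the spectrum. One small slip in a side remark only: without the lower-dimension hypothesis the transfer argument yields $\frac{\theta(1-\rho)}{\rho(1-\theta)}\aso F$, not $\frac{(1-\rho)}{(1-\theta)}\aso F$; this has no bearing on the proof itself.
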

Since $h \leq 1$, $\low J(T) = \ubox J(T) = h$ and the upper bound for $\asospec J(T)$ shows that $\rho \geq {1}/{(1+\pmax)}$, so we need only show that $\rho \leq  1/(1+\pmax)$. To do this, let $\omega \in \mathbf{\Omega}$ be such that $\petal = \pmax$, and recall that there exists some sufficiently small neighbourhood $U_\omega = B(\omega,r_{\omega})$ such that there exists a unique holomorphic inverse branch $T_{\omega}^{-1}$ of $T$ on $U_\omega$ such that $T_{\omega}^{-1}(\omega) = \omega$. Using  \cite[Lemma 1]{DU3}, for $\xi \in U_\omega \cap \big(J(T) \setminus \{\omega\}\big)$ and $n \in \mathbb{N}$, 
\[ \vert T_\omega ^{-n} (\xi) - \omega \vert \approx n^{-1/\pmax}.\]
Therefore, using $T$-invariance of $J(T)$, we can find a subset of $J(T)$ with the same Assouad spectrum as the   set $\{ n^{-1/\pmax}: n \in \mathbb{N} \}$.  Therefore, 
\begin{align*}
\asospec J(T) &\geq \asospec \{ n^{-1/\pmax}: n \in \mathbb{N} \} = \min\left\{1,\frac{\pmax}{(1+\pmax)(1-\theta)}\right\}
\end{align*}
by \cite[Corollary 6.4]{FYu}.  This is enough to ensure $\rho \leq  {1}/{(1+\pmax)}$. Therefore, by Proposition \ref{rho}, for $\theta \in (0,{1}/{(1+\pmax)})$ 
\begin{align*}\asospec J(T) \geq h + \frac{(1-1/(1+\pmax))\theta}{(1-\theta)/(1+\pmax)}(1-h) 
=  h+\frac{\theta\pmax}{1-\theta}(1-h)
\end{align*}
as required.

\subsubsection{When $h \geq 1$}\label{AsospecJ2}
 We show $\asospec J(T) = h.$ This follows easily, since $h=\ubox J(T) \leq \asospec J(T) \leq \aso J(T) = h.$

\subsection{Lower spectrum of $J(T)$: proof of Theorem \ref{julias} part \ref{thmj4}}\label{LowspecJ}

\subsubsection{When $h \leq 1$}\label{LowspecJ1}
 We show $\lowspec J(T) = h.$ This follows easily, since $h=\low J(T) \leq \lowspec J(T) \leq \lbox J(T) = h.$
\subsubsection{When $h \geq 1$}\label{LowspecJ2}
 We show \[\lowspec J(T) =  h+\min\left\{1,\frac{\theta\pmax}{1-\theta}\right\}(1-h).\]
Note that 
\[\lowspec J(T) \geq \lowspec m \geq   h+\min\left\{1,\frac{\theta\pmax}{1-\theta}\right\}(1-h)\]
and so it suffices to prove the upper bound. To do this, we require the following technical lemma, which is a quantitative version of the well-known Leau--Fatou flower theorem (see \cite[325--363]{shish} and \cite{milnor}). The quantitative version was not known to us initially and we could not find it in the literature, but it seems to be known in the complex dynamics community. We thank Davoud Cheraghi for explaining this version to us. 
 We note that the non-quantitative version, e.g. that stated in \cite{ADU, DU4}, is enough to bound the lower dimension from above, but not the lower spectrum.  
\begin{lem}\label{qflower}
Let $\omega \in \mathbf{\Omega}$ be a parabolic fixed point with petal number $\petal$. Then there exists a constant $C>0$ such that for all sufficiently small $r>0$, $B(\omega,r) \cap J(T)$ is contained in a $C\log(1/r)r^{1+\petal}$-neighbourhood of the set of  lines emanating from $\omega$ in the ($\petal$ many) repelling directions.
\end{lem}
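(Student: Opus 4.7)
The plan is to work in Fatou coordinates near $\omega$ and to identify precisely which points $z \in B(\omega,r)$ lie in the parabolic attracting basin of $\omega$. Since $J(T)$ is disjoint from this basin, it suffices to show that $B(\omega,r)$ is covered by the basin except on a $Cr^{p(\omega)+1}$-neighbourhood of the repelling rays.

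After a linear conjugation, assume $\omega = 0$, write $p = p(\omega)$, and $T(z) = z + az^{p+1} + O(z^{p+2})$ with $a \neq 0$. The $p$ repelling directions at $0$ are the rays on which $\arg(az^{p}) = 0$, and the $p$ attracting directions are the rays on which $\arg(az^{p}) = \pi$. I would use a \emph{refined} Fatou coordinate $\Phi$, constructed on a punctured neighbourhood $U \setminus \{0\}$ by the classical Abel-equation argument, satisfying
\[
\Phi(T(z)) = \Phi(z) + 1 + O(\Phi(z)^{-2}), \qquad \Phi(z) \sim -\frac{1}{paz^{p}} \text{ as } z \to 0.
\]
Under this coordinate the attracting directions map to the positive real ray of the $\zeta$-plane, the repelling directions map to the negative real ray, and the dynamics is approximately a unit translation $\zeta \mapsto \zeta + 1$.

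The heart of the argument is the following basin criterion: there is a constant $M$, depending only on $U$, such that whenever $|\mathrm{Im}(\Phi(z))| > M$ the orbit $\{T^{n}(z)\}_{n \geq 0}$ stays in $U$ and converges to $0$, so that $z$ lies in the Fatou set. This is established by a bootstrap using the refined Fatou relation: iterating gives $\zeta_{n} := \Phi(T^{n}(z)) = \Phi(z) + n + E_{n}$ with $|E_{n}| \leq C \sum_{k<n} |\zeta_{k}|^{-2}$, and since the approximate orbit $\zeta_{k} \approx \Phi(z) + k$ traces a horizontal line at height $|\mathrm{Im}(\Phi(z))|$, the sum is dominated by $\int dx/(x^{2} + |\mathrm{Im}(\Phi(z))|^{2}) = \pi/|\mathrm{Im}(\Phi(z))|$, which is much less than $M$ once $M$ is large enough. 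This keeps $|\zeta_{n}| \gtrsim M$, hence $|T^{n}(z)| = O(|\zeta_{n}|^{-1/p})$ uniformly small, and $\mathrm{Re}(\zeta_{n}) \to +\infty$ forces $T^{n}(z) \to 0$.

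By contraposition, every $z \in J(T) \cap B(\omega, r)$ satisfies $|\mathrm{Im}(\Phi(z))| \leq M$. Writing $z = re^{i\theta}$ and letting $\theta_{r}$ denote the nearest repelling direction, the asymptotic $\Phi(z) \sim -1/(paz^{p})$ yields $\mathrm{Im}(\Phi(z)) \approx (\theta - \theta_{r})/(|a|r^{p})$ for small $|\theta - \theta_{r}|$, so the constraint reads $|\theta - \theta_{r}| \lesssim r^{p}$, corresponding to Euclidean distance at most $Cr^{p+1}$ from the repelling ray. The main technical obstacle is the construction of the refined Fatou coordinate with $O(\Phi^{-2})$ error: the naive choice $\Phi(z) = -1/(paz^{p})$ only gives $O(\Phi^{-1})$ error, and the resulting sum $\sum |\zeta_{k}|^{-1}$ diverges logarithmically along the most delicate part of the orbit, which would fail to close the bootstrap and would give a weaker exponent than $p+1$. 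Adding a suitable logarithmic correction term to the naive choice cancels the next-order error and yields the required $O(\Phi^{-2})$ estimate; once this is in place, the remainder of the argument is a routine calculation.
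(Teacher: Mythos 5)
Your route is essentially the paper's: both pass to a coordinate $\zeta=\Phi(z)\approx -1/(paz^{p})$ (the paper uses $z\mapsto 1/z^{p}$) in which $T$ becomes a near-translation, and both reduce the lemma to the statement that the Julia set lies in a horizontal strip of bounded height, which pulls back to the cuspidal $Cr^{p+1}$-neighbourhoods. You supply far more detail than the paper's sketch on the key estimate: the bootstrap with the log-corrected Fatou coordinate, and your observation that the naive $O(\Phi^{-1})$ error is not summable along the delicate part of the orbit, are both correct and standard. The paper instead delegates the qualitative input to the classical Leau--Fatou theorem before doing the quantitative upgrade.

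There is, however, a gap in your final deduction. The criterion you establish is ``$|\mathrm{Im}(\Phi(z))|>M$ implies $z$ lies in the basin'', whose contrapositive places $J(T)\cap B(0,r)$ in the set $\{|\mathrm{Im}(\Phi)|\leq M\}$. This is a \emph{bi-infinite} strip in the $\zeta$-plane, and its preimage contains cuspidal neighbourhoods of the attracting directions as well as the repelling ones: on an attracting ray $\Phi(z)$ is large and positive real, so $|\mathrm{Im}(\Phi(z))|\leq M$ holds there even though $|\theta-\theta_{r}|$ is of order $1$. Your linearisation $\mathrm{Im}(\Phi(z))\approx(\theta-\theta_{r})/(|a|r^{p})$ is valid only for $\theta$ near $\theta_{r}$, so the step ``the constraint reads $|\theta-\theta_{r}|\lesssim r^{p}$'' does not follow from what you have proved. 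To repair it, prove the complementary basin criterion $\mathrm{Re}(\Phi(z))>M\Rightarrow z$ in the basin (the same bootstrap, in fact easier, since the orbit marches rightwards from the start); this cuts the strip down to the half-infinite strip the paper refers to. Alternatively, first invoke the non-quantitative flower theorem, as the paper does, to confine $J(T)\cap B(0,r)$ to sectors about the repelling directions before making the quantitative estimate. (For the paper's application --- counting $r$-balls needed to cover $B(\omega,r^{\theta})\cap J(T)$ --- the weaker conclusion with $2p$ rays would do up to constants, but it is not the lemma as stated.)
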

\begin{proof}
  We may assume via standard reductions that $\omega = 0$ and that the repelling directions are $e^{n2\pi i/\petal}$ for $n = 0,1,\dots,\petal-1$. By the (non-quantitative) Leau--Fatou flower theorem,  $B(0,r) \cap J(T)$ is contained in a cuspidal neighbourhood of the repelling directions.  Moreover, we may assume (by a simple change of coordinates if necessary) that
\[
T(z) = z+az^{\petal+1} + O(z^{\petal+2} )
\]
for a strictly positive real coefficient $a$.  Apply  the coordinate transformation $\psi: z \mapsto 1/z^{\petal}$ which sends the fixed point to infinity and the  repelling directions   to 1.  We will study the neighbourhoods of the repelling directions by considering the dynamics of  the  conjugated map  $\psi \circ T \circ \psi^{-1}$  at infinity.  This map is multiply defined, with one branch for each repelling direction.  We will explicitly describe the principal  branch which sends $[0,\infty]$ to $[0,\infty]$.  The others are handled similarly.  For this branch,   $\psi \circ T \circ \psi^{-1}$  is a well-defined function and is approximately a (real) translation.  More precisely, computing the  Laurent series at $z=\infty$,
\[
\psi \circ T \circ \psi^{-1} (z) = z-c+O(1/z^{1/p})
\] 
where $c$ is a strictly positive real constant (in fact $c=a \, \petal)$.  This ensures that for some $\varepsilon>0$, $\psi(J(T) \cap B(0,\varepsilon))$ is contained in a strip bounded between the curves
\[
x\pm f(x) i
\]
for an increasing concave function $f:[0,\infty) \to [0,\infty)$ which satisfies
\[
f(x)-f(x-c) \lesssim \textup{Im}\left((x+f(x)i)^{-1/p}\right) \lesssim x^{-1/p} \frac{f(x)}{x}
\]
for all $x \in [1,\infty)$. In particular, for $x \geq 1$, by rearranging,
\[
1-\frac{C''}{x^{1/p+1}} \leq \frac{f(x-c)}{f(x)}
\] 
for some constant $C''>0$ and therefore
\[
f(x) \leq  C' \log(x)
\]
for some constant $C'>0$. The pre-image of this strip under the coordinate transformation consists of cuspidal neighbourhoods of the repelling axes, and an easy calculation gives the desired result. Indeed,  considering only the pre-image which maps 1 to 1, expanding at $x=\infty$,
\[
\psi^{-1}(x\pm C'\log(x) i) = x^{-1/\petal}+i \, O\left( \log(x) (x^{-1/\petal})^{1+\petal} \right)
\]
and so, writing $r=x^{-1/\petal}$,  $J(T)$ is contained in a $C\log(1/r) r^{1+\petal}$-neighbourhood of the real axis.  The other pre-images are handled similarly. 
\end{proof}
We can use our work on the lower spectrum of $m$  to show that the exponent used in Lemma \ref{qflower} is sharp.  Again, this seems to be well-known in the complex dynamics community (even a stronger form of sharpness than we give) but we mention it since we provide a new approach.
\begin{cor}
In the case where $\omega \in \mathbf{\Omega}$ is of maximal rank and $h>1$, the expression $Cr^{1+\petal}$ in Lemma \ref{qflower} cannot be replaced by $Cr^{1+\petal+\epsilon}$ for any $\epsilon >0$.
\end{cor}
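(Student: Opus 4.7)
My plan is to argue by contradiction, using our computation of $\lowspec m$ at $\omega$ (Sections \ref{LowspecM1} and \ref{LowspecM2}) together with the global measure formula.  Suppose Lemma \ref{qflower} could be strengthened: for every sufficiently small $R > 0$, the set $B(\omega, R) \cap J(T)$ would lie in the $CR^{1+\pmax+\epsilon}$-neighbourhood of the $\pmax$ rays through $\omega$, that is, a union of thin rectangles of length $\approx R$ and width $\approx R^{1+\pmax+\epsilon}$.

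Setting $r = CR^{1+\pmax+\epsilon}$, I would cover $B(\omega, R) \cap J(T)$ by $N \lesssim R/r$ balls of radius $r$ with centres in $J(T)$, chosen via a Whitney-type construction so that each centre lies in the radial Julia set and is far enough from any pre-parabolic point to guarantee $\phi(y, r) \lesssim 1$.  By Theorem \ref{Global2} this yields $m(B_i) \lesssim r^h$ on each cover ball, so
\[
m(B(\omega, R)) \;\lesssim\; N \, r^h \;\lesssim\; R \cdot r^{h-1} \;=\; R^{\,h + (h-1)(\pmax+\epsilon)}.
\]
Applying Theorem \ref{Global2}(ii) directly at $\omega \in J_p(T)$ gives $m(B(\omega, R)) \approx R^{\,h + (h-1)\pmax}$.

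Combining the two estimates would force $R^{(h-1)\epsilon} \gtrsim 1$ as $R \to 0$, which fails whenever $h > 1$.  Reinterpreting the same comparison in spectral language: at the particular scale ratio $\theta = 1/(1 + \pmax + \epsilon) < 1/(1+\pmax)$, the hypothetical improved flower would drive the ball-measure ratio at $\omega$ past the extremal exponent $h+(h-1)\pmax$ underpinning our evaluation of $\lowspec m$ in Section \ref{LowspecM2}, contradicting our result.  The hardest part of the argument will be the uniform control of the factor $\phi(y, r)$ at the cover centres: $\phi$ blows up close to pre-parabolic points when $h < 1$, so keeping $\phi \lesssim 1$ relies on a careful Whitney-style decomposition.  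The case $h \leq 1$ will need a dual reading of the same incompatibility through Section \ref{LowspecM1}, where $\omega$ is the extremal point realising $\lowspec m = h+(h-1)\pmax$, and an improved flower would force the ratios at $\omega$ to violate this extremal equality.
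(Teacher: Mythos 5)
Your main computation is essentially the paper's argument: both derive, from the hypothetical improved flower, an upper bound of order $R/r$ on the number of $r$-balls needed to cover $B(\omega,R)\cap J(T)$ at the critical scale $r\approx R^{1+\pmax+\epsilon}$, and both contradict this using the conformal measure at $\omega$ --- you via the direct comparison $R^{h+(h-1)\pmax}\approx m(B(\omega,R))\lesssim N r^{h}$, the paper via the lower bound for $\lowspec m$ from Section \ref{LowspecM2}, which encodes the same information at $\theta$ slightly below $1/(1+\pmax)$. Two remarks. First, the ``Whitney-type construction'' you single out as the hardest step is unnecessary precisely in the regime where your contradiction operates: when $h\geq 1$, every branch of Theorem \ref{Global2} gives $\phi(\xi,\rho)\lesssim 1$ (each bracketed quantity is at most $1$ and is raised to a nonnegative power), so $m(B(y,r))\lesssim r^{h}$ for \emph{every} $y\in J(T)$ and any cover with centres in $J(T)$ will do. Second, your closing claim that $h\leq 1$ follows by ``a dual reading'' through Section \ref{LowspecM1} is not an argument and cannot work as described: the improved flower only yields an \emph{upper} bound on covering numbers near $\omega$ (equivalently, it would force $\lowspec J(T)\leq 1$ for $\theta>1/(1+\pmax+\epsilon)$), and when $h\leq 1$ this is consistent both with $\lowspec m=h+(h-1)\pmax\leq 1$ and with $m(B(\omega,R))\approx R^{h+(h-1)\pmax}$; indeed your own forced inequality $R^{(h-1)\epsilon}\gtrsim 1$ holds trivially there, so no contradiction arises. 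You should either state the contradiction only for $h>1$ --- noting that the paper's proof, which appeals solely to the $h\geq 1$ bound of Section \ref{LowspecM2}, carries the same restriction --- or supply a genuinely different mechanism for $h\leq 1$.
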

\begin{proof}
Suppose that such an $\epsilon$-improvement was possible for some $\omega \in \mathbf{\Omega}$ of maximal rank. Then, taking efficient $r$-covers of the  improved cuspidal neighbourhood of $B(\omega, r^{1+\pmax+\epsilon}) \cap  J(T)$,  we would  obtain $\lowspec J(T) \leq 1$ for all $\theta \geq {1}/{(1+\pmax+\epsilon)}$.  This  contradicts the lower bound for the lower spectrum of $m$, proved in Section \ref{LowspecM2}.
\end{proof}
We can now prove the upper bound. Note that when $\theta \geq {1}/{(1+\pmax)}$, we immediately have $\lowspec J(T) \leq 1$ by Lemma \ref{qflower}, so we assume that $\theta \in (0, {1}/{(1+\pmax)})$. Let $\omega \in \mathbf{\Omega}$ be such that $p(\omega) = \pmax$, and let $r>0$ be sufficiently small. Then we can estimate 
$N_{r}(B(\omega,r^\theta) \cap J(T))$
 by first covering $ B(\omega,r^\theta) \cap J(T)$ with balls of radius $r^{\theta(1+\pmax)}$, and then covering each of those balls with balls of radius $r$. Using the fact that $\aso J(T) = h$, 
\begin{align*}
N_{r}(B(\omega,r^\theta) \cap J(T)) &\lesssim N_{r^{\theta(1+\pmax)}}(B(\omega,r^\theta) \cap J(T)) \left(\frac{r^{\theta(1+\pmax)}}{r} \right)^h\\
&\lesssim \frac{r^\theta \log(1/r^\theta)}{r^{\theta(1+\pmax)}}  \left(\frac{r^{\theta(1+\pmax)}}{r} \right)^h \qquad  \text{by Lemma \ref{qflower}}\\
&= r^{-\theta\pmax + h(\theta(1+\pmax)-1)}   \log(1/r^\theta)\\
&= \left(r^{\theta-1}\right)^{h+\frac{\theta\pmax}{\theta-1}(h-1)}  \log(1/r^\theta)
\end{align*}
which proves that $
\lowspec J(T) \leq h+(\theta\pmax/(1-\theta))(1-h)
$
as required. 

\section{Cremer points: proof of Theorem \ref{cremer}} \label{cremerproof}

We assume without loss of generality that 0 is the Cremer fixed point. Then 
\begin{equation} \label{poly1}
T(z) = e^{2 \pi i \alpha} z +O(z^{2})
\end{equation}
in a neighbourhood of 0 for some $\alpha \notin \mathbb{Q}$. Without loss of generality we may assume \eqref{poly1} holds in the closed ball $B(0,1)$.  It follows that for sufficiently small $z$  and $n \in \mathbb{N}$ such that $\{z, T(z), \dots, T^{n-1}(z)\} \subseteq B(0,1)$ we have
\begin{equation} \label{poly}
T^n(z) = e^{2 \pi i n\alpha} z +5^{n-1} O(z^{2})
\end{equation}
where the implicit constant is the same as in \eqref{poly1}, in particular, independent of $n$ and $z$.  This can be proved by a simple induction.    Let $C \geq 1$ denote the implicit constant from \eqref{poly1} and suppose $|z| \leq 1/C$ and that  $\{z, T(z), \dots, T^{n-1}(z)\} \subseteq B(0,1)$.  Assume $n \geq 2$ and  that \eqref{poly} has been verified for $T^{n-1}$ (Inductive Hypothesis).  Then
\begin{align*}
\left\lvert T^n(z)-e^{2 \pi i n \alpha} z \right\rvert & \leq \left\lvert T^{n-1}(T(z))-e^{2 \pi i (n-1) \alpha} T(z) \right\rvert + \left\lvert e^{2 \pi i (n-1) \alpha} T(z) -e^{2 \pi i n \alpha} z\right\rvert \\
 & \leq  5^{n-2}C |T(z)|^2  + \left\lvert T(z) -e^{2 \pi i  \alpha} z\right\rvert  \qquad \text{(by Inductive Hypothesis)}\\
 & \leq 5^{n-2}C  \left(|z|+C|z|^2 \right)^2 + C|z|^2 \qquad \text{(by \eqref{poly1})}\\
 & = C|z|^2 \left(5^{n-2}(1+2C|z| +C^2|z|^2)+1 \right)   \\
& \leq 5^{n-1}C |z|^2\qquad \text{(using $C|z| \leq 1$)}
\end{align*}
proving \eqref{poly}.  For $\delta \in (0,1)$, we say a set $A \subseteq \mathbb{R}^d$ is $\delta$-\emph{dense} in a set $B \subseteq \mathbb{R}^d$  if $\sup_{b \in B} \inf_{a \in A} |a-b | \leq \delta$ and let 
\[
M(\alpha, \delta) = \min\Big\{ m : \{0,\alpha (\textup{mod}\ 1), \dots, m\alpha (\textup{mod}\ 1) \} \text{ is $\delta$-dense in $[0,1)$}\Big\}.
\]
Note that $M(\alpha, \delta)$ is finite for all $\delta \in (0,1)$ since $\alpha \notin \mathbb{Q}$ and that $M(\alpha, \delta) \to \infty$ as $\delta \to 0$.

To prove that $\aso J(T) = 2$, we show there exist  $0<r<R<1$ with $r/R$ arbitrarily small such that
  $J(T)$ is $(c r)$-dense in $B(0,R)$ for some uniform constant $c \geq1$.  Let $R \in (0,1/5)$ be small and choose $r$ depending on $R$ such that $r/R \to 0$ sufficiently slowly  as $R \to 0$ to ensure
\begin{equation} \label{growth}
5^{M(\alpha, r/R )} R^{2} \leq r.
\end{equation}
Note that this forces $r/R^{2} \to \infty$. It is clear that we can choose $r$ in this way but, to be concrete, set $r= \lambda R$ where 
\[
\lambda = \inf\{ \lambda'>0 : \lambda' \geq  5^{M(\alpha,\lambda')+1} R\}.
\]
Note that $\lambda \in (0,1]$ is well-defined since $1 \geq 5^{M(\alpha,1)+1}R = 5R$.  Moreover, for fixed $\lambda' \in (0,1]$, 
\[
5^{M(\alpha,\lambda')+1} R
\]
is decreasing and  converges  to 0 as $R \to 0$ which is enough to ensure that $\lambda \to 0$ as $R \to 0$.

   Let  $y \in  B(0,R)$ be arbitrary and choose $z \in J(T)$ with $|z|= |y| $.  We can choose $z$ in this way for sufficiently small $R$ using P\'erez-Marco's celebrated result that Cremer fixed points are contained in a  non-trivial connected component of the Julia set \cite[Theorem 1]{hedgehog}. However, we note that it is possible to adapt our  proof so as not to rely on this result by simply using that the Julia set is perfect and the Cremer point is indifferent (that is, we can choose $z$ with $|z|$ `close enough' to $|y|$).  By definition, for $r<|y|$, we can find  $n \in \mathbb{Z}$ with $0 \leq n-1 \leq M(\alpha, r/|y| ) \leq M(\alpha, r/R ) $ such that 
\begin{equation} \label{modsmall}
\big\lvert (\arg(z) + 2 \pi n \alpha)  (\textup{mod}\ 2 \pi) - \arg(y)  \big\rvert \leq  2 \pi r/|y|.
\end{equation}
Using \eqref{growth},
\[
|z|+5^{M(\alpha, r/R )}  C |z|^2 \leq R+5^{M(\alpha, r/R )}  C R^2 \leq R+Cr \leq 1
\]
for sufficiently small $R$ and so $\{z, T(z), \dots, T^{n-1}(z)\} \subseteq B(0,1)$ by iteratively applying \eqref{poly}.  Therefore,  by \eqref{poly}, \eqref{growth}, and \eqref{modsmall}
\begin{align} \label{argclose}
| \arg(T^n(z)) -\arg(y) | \nonumber \lesssim  \big\lvert (\arg(z) + 2 \pi n \alpha)  (\textup{mod}\ 2 \pi) - \arg(y)  \big\rvert    +    5^{n-1}|z| \nonumber &\lesssim r/|y| + 5^{M(\alpha, r/R )} R^{2}/|y| \nonumber  \\  &\lesssim r/|y|.
\end{align}
Moreover, by \eqref{poly} and \eqref{growth},
\begin{equation} \label{modclose}
\Big\lvert |T^n(z)| - |y| \Big\rvert  = \Big\lvert |T^n(z)| - |z| \Big\rvert \lesssim  5^{n-1}|z|^{2} \lesssim   5^{M(\alpha, r/R )} R^{2} \leq r.
\end{equation}
Together \eqref{argclose} and \eqref{modclose} yield
\[
|T^n(z) - y| \lesssim r
\]
and since the Julia set is $T$-invariant the result follows.

\begin{center} \textbf{Acknowledgements}
\end{center}
\vspace{-3mm}
The authors thank Davoud Cheraghi and Mariusz Urba\'nski for helpful discussions. JMF was financially supported by an \textit{EPSRC Standard Grant} (EP/R015104/1) and a \textit{Leverhulme Trust Research Project Grant} (RPG-2019-034). LS was financially supported by the University of St Andrews.

\bibliographystyle{apalike}
\addcontentsline{toc}{section}{References}
\bibliography{Julia}
\end{document}